   \definecolor{labelkey}{gray}{.8}
\providecommand{\dx}{\, \mathrm{d} x}
\providecommand{\dy}{\, \mathrm{d} y}
\providecommand{\R}{\mathbb{R}}
\providecommand{\N}{\mathbb{N}}
\providecommand{\hom}{{\rm hom}}
\newcommand{\e}{\varepsilon}
\newcommand{\Z}{\mathbb Z}
\newcommand{\step}[1]{\medskip\noindent\textbf{Step #1. }}
\newcommand{\ignore}[1]{}
\newtheorem{theorem}{Theorem}
\newtheorem{proposition}[theorem]{Proposition}
\newtheorem{remark}[theorem]{Remark}
\newtheorem{lemma}[theorem]{Lemma}
\newtheorem{corollary}[theorem]{Corollary}
\newtheorem{assumption}[theorem]{Assumption}
\author{Matthias Ruf}
\address[Matthias Ruf]{Institut f\"ur Mathematik, Universit\"at Augsburg, Universit\"atsstra\ss e 14, 86159 Augsburg, Germany}
\email{matthias.ruf@uni-a.de}
\author{Mathias Sch\"affner}
\address[Mathias Sch\"affner]{Institut f\"ur Mathematik, MLU Halle-Wittenberg, Theodor-Lieser-Stra\ss e 5, 06120 Halle (Saale), Germany}
\email{mathias.schaeffner@mathematik.uni-halle.de}
\title[Homogenization in nonlinear elasticity]{Upper bounds for the homogenization problem in nonlinear elasticity: the incompressible case}
\begin{document}
	
	\begin{abstract}
		We consider periodic homogenization of hyperelastic models incorporating incompressible behavior via the constraint $\det(\nabla u)=1$. We show that the 'usual' homogenized integral functional $\int W_{\rm hom}(\nabla u)\,dx$, where $W_{\rm hom}$ is the standard multicell-formula of non-convex homogenization restricted to volume preserving deformations, yields an upper bound for the $\Gamma$-limit as the scale of periodicity tends to zero.

	\end{abstract}
	
\maketitle

{\small
	\noindent\keywords{\textbf{Keywords:} periodic homogenization, incompressible materials, upper bound for Gamma-limsup}
	
	\noindent\subjclass{\textbf{MSC 2020:} 49J45, 74B20, 74Q05}
}	

\section{Introduction and main result}
In this note we provide partial results towards periodic homogenization of nonlinear elastic incompressible materials, modeled by integral functionals of the form
\begin{equation}\label{intro:int}
F_\e:\, W^{1,1}(\Omega)^d\to[0,+\infty],\qquad F_\e(u,\Omega)=\int_\Omega W(\tfrac{x}\e,\nabla u(x))\dx.
\end{equation}
Here, $\Omega\subset \R^d$ with $d\geq2$ is a bounded domain and the stored elastic energy density $W:\R^d\times \R^{d\times d}\to[0,+\infty]$ is assumed to be $Y=(-\frac12,\frac12)^d$-periodic in the first variable. Homogenization of integral functionals \eqref{intro:int} is well established: Following earlier results by Marcellini \cite{Marcellini} for convex integrands, it is shown in the seminal contributions by Braides \cite{Braides85} and M\"uller \cite{Mueller87} that under standard $p$-growth assumptions, that is,
%
\begin{equation}\label{p:growth}
\exists p>1\,\exists c\geq 1:\quad \frac{1}{c}|F|^p-c\leq W(y,F)\leq c(1+|F|)^p\quad\mbox{for all $F\in\R^{d\times d}$ and a.e.\ $y\in\R^d$,}
\end{equation} 
the sequence $(F_\e)$ $\Gamma$-converges as $\e\to0$ towards an autonomous integral functional
\begin{equation*}
	F_{\rm hom}(u)=\int_{\Omega}W_{\rm hom}(\nabla u)\dx,
\end{equation*}
where the homogenized integrand $W_{\rm hom}$ is given by a multi-cell formula
\begin{equation}\label{intro:multicell}
\overline W(F)=\inf_{k\in\N}\inf_{\varphi\in W^{1,p}_{0}(kY)}\fint_{kY} W(x,F+\nabla\varphi)\dx.
\end{equation}
While the homogenization formula \eqref{intro:multicell} captures non-trivial effects in nonlinear elasticity, such as buckling (see e.g. \cite[Theorem 4.3]{Mueller87}), and it is expected that \eqref{intro:multicell} should apply to physically sound stored-energy functions (see \cite{LFL22}), it was already remarked in \cite{Mueller87} that the above mentioned $\Gamma$-convergence results do not apply directly to nonlinear elasticity for the growth conditions in \eqref{p:growth} being too restrictive. Realistic elastic energy densities $W$ should satisfy  
\begin{equation}\label{ass:OP}
W(x,F)=+\infty\quad\mbox{ if $\det(F)<0$ and }\qquad W(x,F)\to+\infty\quad\mbox{as}\quad \det(F)\to0,
\end{equation}
in order to rule out interpenetration of matter and to ensure that it takes infinite energy to squeeze material to zero volume. Clearly, \eqref{ass:OP} is incompatible with \eqref{p:growth}. While there are several homogenization results under relaxed versions of  the growth conditions \eqref{p:growth}, see e.g.\ the textbooks \cite{JKO,BD98} or more recent contributions \cite{AM11,ACM17,DG_unbounded,RS24}, there is no $\Gamma$-convergence result which applies to growth conditions of the form \eqref{ass:OP}. To the best of our knowledge, \cite{NS18,NS19} contain the only rigorous homogenization results that apply directly to nonlinear elasticity, which are however restricted to small loads.

%
%

\smallskip

In this note, we provide a nontrivial upper bound for the $\Gamma$-limit of \eqref{intro:int} in the setting of \textit{incompressible} elasticity, that is, we impose an extreme version of \eqref{ass:OP} namely that $W(x,F)=+\infty$ if $\det(F)\neq1$. More precisely, we assume that

\begin{assumption}\label{ass}
Let $W:\R^d\times \Sigma\to[0,+\infty)$,  where $\Sigma:=\{F\in \R^{d\times d}\,:\,\det(F)=1\}$, be a Carath\'eodory-function such that $W(\cdot,F)$ is $Y$-periodic for every $F\in\Sigma$. Moreover, set $W(y,F)=+\infty$ for all $F\in \R^{d\times d}\setminus \Sigma$ and all $y\in Y$. Define $W_{\hom}$ via the multi-cell formula
\begin{equation}\label{eq:defW_hom}
	W_{\rm hom} (F):=\inf_{k\in\N} W_{\rm hom}^{(k)}(F),\quad\mbox{where}\quad W_{\rm hom}^{(k)}(F):=\inf_{\varphi\in W_0^{1,\infty}(kY)^d}\fint_{kY}W(y,F+\nabla  \varphi(y))\dy.
\end{equation}
We assume that there exists $c\geq1$ such that for a.e. $x\in\R^d$ and all $F,G\in\Sigma$
\begin{align}
W(x,FG)&\leq c(1+W(x,F))(1+W(x,G)),\label{eq:submultiplicative}
\\
\frac{1}{c}|F|-c&\leq W(x,F)\leq c(\,W_{\rm hom}(F)+1).\label{eq:W_hom_comparison}
\end{align}
Finally, we assume that $W_{\rm hom}$ is finite on $\Sigma$. 
\end{assumption}
\begin{remark}\label{r.onAss}
Assumption~\ref{ass} covers for instance periodic integrands with the following growth conditions:
\begin{enumerate}[(I)]
	\item $p$-growth: $\frac{1}{c}|F|^p-c\leq W(x,F)\leq c(|F|^p+1)$ for all $F\in\Sigma$,
	\item dependence on cofactors: $\frac{1}{c}\left(|F|^p+|{\rm adj}F|^q\right)-c\leq W(x,F)\leq c\left(|F|^p+|{\rm adj}F|^q\right)+c$ for all $F\in\Sigma$.
\end{enumerate}	
Here $p,q\in [1,+\infty)$ are arbitrary growth exponents and ${\rm adj}F$ denotes the adjunct matrix of $F$. For a proof see Lemma~\ref{L:pqexample} below.
\end{remark}

The main result of this work is the following
\begin{theorem}\label{thm:upperbound}
	Let $\Omega\subset\R^d$ be a bounded, open set with Lipschitz boundary and let $W:\R^d\times\R^{d\times d}\to [0,+\infty]$ satisfy Assumption \ref{ass}. Then for every $u\in W^{1,1}(\Omega)^d$ there exists a family $(u_{\e})_{\e>0}\subset u+W^{1,1}_0(\Omega)^d$ such that $u_{\e}\to u$ in $L^1(\Omega)^d$ and 
	\begin{equation}\label{eq:mainrecovery}
	\limsup_{\e\to 0}\int_{\Omega}W(\tfrac{x}{\e},\nabla u_{\e}(x))\dx\leq \int_{\Omega}W_{\rm hom}(\nabla u(x))\dx,
	\end{equation}
	where $W_{\rm hom}$ is defined in \eqref{eq:defW_hom}.	The map $W_{\rm hom}$ is finite only on $\Sigma$, where it is also continuous. Moreover, $W_{\rm hom}$ is quasiconvex in the sense that
	\begin{equation}\label{eq:defquasiconvex}
	W_{\rm hom}(F)\leq \fint_{O}W_{\rm hom}(F+\nabla\varphi(x))\dx\quad\text{ for all }\varphi\in W^{1,\infty}_0(O)^d,
	\end{equation}
	all $F\in\R^{d\times d}$ and all bounded, open sets $O\subset\R^d$.
\end{theorem}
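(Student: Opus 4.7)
My plan is to obtain the recovery sequence through the standard affine $\to$ piecewise affine $\to$ general reduction, interwoven with the verification of the structural properties of $W_{\rm hom}$. The unusual feature compared to the classical Braides--M\"uller construction is the determinant constraint, which forbids conventional cut-offs and forces every approximant to stay in $\Sigma$.

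I would first settle the structural properties of $W_{\rm hom}$. That $W_{\rm hom}(F)=+\infty$ for $F\notin\Sigma$ follows from the null-Lagrangian identity $\fint_{kY}\det(F+\nabla\varphi)\,dy=\det F$, valid for every $\varphi\in W^{1,\infty}_0(kY)^d$: if $W_{\rm hom}^{(k)}(F)$ were finite then $\det(F+\nabla\varphi)=1$ a.e., forcing $\det F=1$. For continuity of $W_{\rm hom}$ on $\Sigma$, given $F_n\to F$ in $\Sigma$ and a near-optimal $\varphi$ for $W_{\rm hom}^{(k)}(F)$, the map $\psi_n:=F_nF^{-1}\varphi\in W^{1,\infty}_0(kY)^d$ is admissible for $W_{\rm hom}^{(k)}(F_n)$ with gradient $F_nF^{-1}(F+\nabla\varphi)\in\Sigma$; the submultiplicative bound~\eqref{eq:submultiplicative} and the upper bound in~\eqref{eq:W_hom_comparison}, combined with the Carath\'eodory convergence $W(y,F_nF^{-1})\to W(y,I)$ and dominated convergence, give $\limsup_n W_{\rm hom}(F_n)\leq W_{\rm hom}(F)$, and the reverse inequality follows symmetrically. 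Quasiconvexity of $W_{\rm hom}$ is then a standard multiscale consequence of the multicell formula: for $F\in\Sigma$ and $\varphi\in W^{1,\infty}_0(O)^d$ with $\det(F+\nabla\varphi)=1$ a.e.\ (else the right-hand side of~\eqref{eq:defquasiconvex} is $+\infty$), a Vitali cover of $O$ by small cubes centered at Lebesgue points $x_i$ of $\nabla\varphi$, each equipped with a fine-scale near-optimal corrector for $W_{\rm hom}(F+\nabla\varphi(x_i))$, assembles into a competitor for $W_{\rm hom}^{(R)}(F)$ with $R$ large.

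For the recovery itself, I would first handle the affine target $u(x)=Fx+c$ with $F\in\Sigma$. Fix $\delta>0$, pick $k$ and $\varphi_k\in W^{1,\infty}_0(kY)^d$ with $\fint_{kY}W(y,F+\nabla\varphi_k)\,dy\leq W_{\rm hom}(F)+\delta$, and extend $\varphi_k$ by $kY$-periodicity to $\widetilde\varphi_k\in W^{1,\infty}_{\rm loc}(\R^d)^d$, which vanishes on the grid $k\Z^d+\partial(kY)$. Set
\[
u_\varepsilon(x):=\begin{cases} Fx+c+\varepsilon\,\widetilde\varphi_k(x/\varepsilon), & x\in z+\varepsilon kY\subset\Omega,\\ Fx+c, & x\text{ in a boundary-layer cell.}\end{cases}
\]
Because $\widetilde\varphi_k$ vanishes on every cell face, $u_\varepsilon$ is continuous, agrees with $u$ on $\partial\Omega$, and satisfies $\det\nabla u_\varepsilon=1$ a.e.\ (forced by finiteness of $W$ on $F+\nabla\varphi_k$). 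Riemann--Lebesgue for $Y$-periodic integrands controls the interior contribution by $|\Omega|(W_{\rm hom}(F)+\delta)+o(1)$, while the upper bound in~\eqref{eq:W_hom_comparison} bounds the boundary-layer contribution by $c(W_{\rm hom}(F)+1)\cdot|\{\text{boundary cells}\}|\to 0$. Passing $\delta\to 0$ along a diagonal sequence gives the recovery for affine targets, and the piecewise affine case follows by running the same construction on each region of linearity, using the continuity of $u$ across interfaces to let $u$ itself serve as the boundary-layer extension.

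The remaining and main obstacle is the extension to general $u\in W^{1,1}(\Omega)^d$ with $\det\nabla u=1$ a.e.\ and $\int_\Omega W_{\rm hom}(\nabla u)\,dx<\infty$ (other cases being trivial). What is needed is a sequence of continuous piecewise affine volume-preserving maps $u^\delta\to u$ in $W^{1,1}$ with $\int_\Omega W_{\rm hom}(\nabla u^\delta)\,dx\to \int_\Omega W_{\rm hom}(\nabla u)\,dx$; Attouch's diagonalization lemma then concludes. The difficulty is that nodal interpolation destroys the determinant constraint, and the usual cut-off or convex-combination tricks immediately incur $+\infty$ energy. The natural remedy is to interpolate first to obtain a piecewise affine $\hat u^\delta$ with $\det\nabla\hat u^\delta$ close to $1$ in an appropriate norm, and then post-correct via a Dacorogna--Moser type construction to return to $\Sigma$; the submultiplicative bound~\eqref{eq:submultiplicative} is tailored precisely to absorb the cost of composing with such a near-identity volume-preserving correction, and the comparison~\eqref{eq:W_hom_comparison} together with the already-established continuity of $W_{\rm hom}$ converts this back into convergence of the $W_{\rm hom}$-energies.
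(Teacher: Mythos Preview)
Your reduction to affine and piecewise affine targets is fine, and the structural claims on $W_{\rm hom}$ are largely in the right direction (though your continuity argument via the competitor $\psi_n=F_nF^{-1}\varphi$ needs a uniform $L^\infty$ bound on $W(\cdot,F_nF^{-1})$, which in turn requires local boundedness of $W_{\rm hom}$ near the identity; this is obtainable but not quite as immediate as you suggest). The genuine gap is the final step: passing from piecewise affine volume-preserving maps to a general $u\in W^{1,1}(\Omega)^d$ with $\det\nabla u=1$ and $W_{\rm hom}(\nabla u)\in L^1$.

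You propose nodal interpolation followed by a Dacorogna--Moser post-correction, but this does not close. The Dacorogna--Moser construction produces a diffeomorphism $\psi$ with prescribed Jacobian; composing $\hat u^\delta\circ\psi$ (or $\psi\circ\hat u^\delta$) to force $\det=1$ destroys piecewise affinity, so your recovery construction---which you have only established for piecewise affine targets---no longer applies, and you are back to the original problem. Moreover, the Jacobian data $\det\nabla\hat u^\delta$ is piecewise constant and hence discontinuous, a regime in which Dacorogna--Moser estimates are delicate; and for a merely $W^{1,1}$ target $u$ there is no a priori control ensuring the correction stays close to the identity in a norm strong enough for \eqref{eq:submultiplicative} to yield energy convergence. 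In fact the paper itself flags exactly this obstruction: approximating a volume-preserving $W^{1,1}$ map by piecewise affine volume-preserving maps with energy continuity is, to the authors' knowledge, open.

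The paper sidesteps the density question entirely by abandoning the additive ansatz $u+\varepsilon\varphi(\cdot/\varepsilon)$ in favour of a \emph{compositional} one inspired by Conti--Dolzmann: locally on a ball where $\nabla u$ is close to some $F\in\Sigma$, one sets $z_\varepsilon:=u\circ v_\varepsilon$ with $v_\varepsilon(x)=x+F^{-1}\varepsilon\varphi_\eta(x/\varepsilon)$ a bi-Lipschitz, measure-preserving perturbation of the identity (the latter follows from $\det(F+\nabla\varphi_\eta)=1$). The chain rule gives $\nabla z_\varepsilon=(\nabla u\circ v_\varepsilon)\nabla v_\varepsilon$, so $\det\nabla z_\varepsilon=1$ automatically, and on the set where $\nabla u\circ v_\varepsilon\approx F$ one recovers $\nabla z_\varepsilon\approx F+\nabla\varphi_\eta(\cdot/\varepsilon)$. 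The submultiplicative bound \eqref{eq:submultiplicative} and the area formula (using $\det\nabla v_\varepsilon=1$) then control the error terms without ever needing a piecewise affine approximation of $u$; a Vitali covering by such balls globalises the construction. This is the missing idea in your plan.
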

The above theorem shows that a variant of the multi-cell formula of Braides and M\"uller yields an upper bound for the $\Gamma\hbox{-}\limsup$ of the functionals $F_{\e}$ in the incompressible case. Note that the sole difference between $\overline W$ and $W_{\rm hom}$ (see \eqref{intro:multicell} and \eqref{eq:defW_hom}) is that in the former we take the infimum over all $W^{1,p}$-competitors and in the latter over $W^{1,\infty}$-competitors. Clearly, the formulas \eqref{intro:multicell} and \eqref{eq:defW_hom} are equivalent if the energy density satisfies \eqref{p:growth}. In the setting of Theorem~\ref{thm:upperbound} this equivalence is not clear and there might be a Lavrientiev gap. Unfortunately, we are not able to provide the corresponding $\liminf$ inequality. In Proposition~\ref{thm:lowerbound}, under a stronger lower bound on $W$ we provide a non-trivial lower bound on the $\Gamma$-limit based on truncations that lead to an integrand that is finite only on $\Sigma$, where it is also continuous. Moreover, like $W_{\rm hom}$ it is quasiconvex. We further show that in the polyconvex case there is a commutation of truncation and homogenization for the finite-cell formulas. However, in general we do not know if the lower bound equals the upper bound of Theorem~\ref{thm:upperbound}.

\medskip

Let us now comment on the proof of Theorem~\ref{thm:upperbound}. The classical construction for the upper bound of the Gamma-limit, see e.g.\ \cite{Mueller87}, is based on density results. Indeed, by definition of \eqref{intro:multicell} (or \eqref{eq:defW_hom}) it is straightforward to construct a recovery sequence for affine or piecewise affine functions $u$. In order to pass from piecewise affine to general $W^{1,1}(\Omega)^d$-functions it is important that the energy $F_{\rm hom}$ is continuous with respect to the approximation. In particular we would need to approximate a volume preserving deformation by piecewise affine volume preserving deformations. To the best of our knowledge, this is an open problem. 

Here, we take a different approach which is strongly inspired by the relaxation result \cite{CoDo}, where Conti and Dolzmann considered the relaxation problem for autonomous integrands under constraints on the determinant (see also \cite{CF} for an extension of \cite{CoDo} to non-autonomous problems). A key idea in \cite{CoDo} is that instead of \textit{adding} oscillations to the macroscopic deformation one locally use the \textit{composition} of the macroscopic deformation with maps with oscillating gradients. In the context of homogenization of elliptic equations this is sometimes referred to as harmonic coordinates. More precisely, let $u$ be a function which is close to an affine function with gradient $F\in\Sigma$ and consider the composition $u\circ (\cdot+F^{-1}\e\varphi(\tfrac{\cdot}\e))$, where $\varphi$ is a competitor for \eqref{eq:defW_hom}. The chain rule formally yields $$\nabla u(\cdot+F^{-1}\varphi(\tfrac\cdot\e))(I+F^{-1}\nabla\varphi(\tfrac\cdot\e)).$$ If $u$ was affine with gradient $F$, then this formula reduces to $F+\nabla\varphi(\tfrac\cdot\e)$, but the big advantage is that the multiplicative structure of the gradient preserves the constraints on the determinant. We mention here that in \cite{CoDo} also relaxation of compressible models is studied with similar techniques. Unfortunately we are not able to extend the method to the problem of homogenization of compressible elasticity, see Remark~\ref{r.compressible}.

This note is organized as follows: in Section \ref{sec:prelim} we gather some technical results for $W_{\rm hom}$ defined in \eqref{eq:defW_hom}. In Section~\ref{sec:proofs}, we prove Theorem~\ref{thm:upperbound}. Finally, we discuss in Section~\ref{sec:LB} a lower bound on the $\Gamma$-limit of $F_\e$ via truncation of the integrand.

\section{Preliminary results and properties of \texorpdfstring{$W_{\rm hom}$}{Whom}}\label{sec:prelim}


First, we provide some bounds on the multi-cell formula and establish a useful alternative characterization of $W_{\rm hom}$, see \eqref{eq:defW_hom}.
\begin{lemma}\label{L:generalcellformula}
Suppose that Assumption~\ref{ass} is satisfied. For all $F\in\Sigma$, it holds
\begin{equation}\label{ass:Whomgrowth}
	\frac{1}{c}|F|-c\leq W_{\rm hom}(F)
\end{equation}
where $c$ is given by \eqref{eq:W_hom_comparison}, while it holds $W_{\rm hom}(F)=+\infty$ if $F\notin \Sigma$. Moreover, for all $F\in\Sigma$ and every bounded, open set $O\subset\R^d$ with Lipschitz-boundary it holds that
\begin{equation}\label{limit_whombyO}
W_{\rm hom} (F)=\lim_{k\to+\infty}\inf_{\varphi\in W_0^{1,\infty}(O)^d}\fint_{O}W(ky,F+\nabla  \varphi(y))\dy.
\end{equation}
\end{lemma}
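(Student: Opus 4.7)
The plan is to handle the two displays separately. For the pointwise bounds: when $F\in\Sigma$ and $\varphi\in W_0^{1,\infty}(kY)^d$, extending $\varphi$ by zero outside $kY$ gives $\int_{kY}\nabla\varphi=0$, so by convexity $\fint_{kY}|F+\nabla\varphi|\,dy\ge|F|$; combined with the lower bound in \eqref{eq:W_hom_comparison}, this yields $W_{\rm hom}^{(k)}(F)\ge\tfrac{1}{c}|F|-c$, and \eqref{ass:Whomgrowth} follows by taking the infimum over $k$. When $F\notin\Sigma$, finiteness of $\fint_{kY}W(y,F+\nabla\varphi)\dy$ forces $\det(F+\nabla\varphi)=1$ a.e.\ in $kY$, but $\det$ is a null Lagrangian for $W^{1,\infty}$ maps of prescribed boundary trace, and so $\fint_{kY}\det(F+\nabla\varphi)=\det F$, giving $\det F=1$---a contradiction. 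Hence $W_{\rm hom}^{(k)}(F)=+\infty$ for every $k$, and thus $W_{\rm hom}(F)=+\infty$.

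Next I would record the case $O=Y$ of \eqref{limit_whombyO}, i.e., $W_{\rm hom}^{(k)}(F)\to W_{\rm hom}(F)$ as $k\to\infty$. Given $\eta>0$, fix $n\in\N$ and $\varphi_0\in W_0^{1,\infty}(nY)^d$ with $\fint_{nY}W(y,F+\nabla\varphi_0)\dy\le W_{\rm hom}(F)+\eta$. For $k\gg n$, tile $kY$ with the disjoint translates $z_i+nY$, $z_i\in n\Z^d$, contained in $kY$; the residual boundary layer has measure $\lesssim nk^{d-1}$. Setting $\varphi_k(z):=\varphi_0(z-z_i)$ on the $i$-th interior tile and $\varphi_k\equiv0$ on the boundary layer defines a function $\varphi_k\in W_0^{1,\infty}(kY)^d$ (the patches glue because $\varphi_0=0$ on $\partial(nY)$). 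By $Y$-periodicity of $W$ together with the upper bound $W(z,F)\le c(W_{\rm hom}(F)+1)$ of \eqref{eq:W_hom_comparison} on the boundary layer, this gives $W_{\rm hom}^{(k)}(F)\le W_{\rm hom}(F)+\eta+O(n/k)$; sending first $k\to\infty$ and then $\eta\to0$ yields the convergence.

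For the general statement, rescale $z=ky$ to rewrite the right-hand side of \eqref{limit_whombyO} as $A_k(O,F):=\inf_{\psi\in W_0^{1,\infty}(kO)^d}\fint_{kO}W(z,F+\nabla\psi(z))\,dz$. The bound $\limsup_k A_k(O,F)\le W_{\rm hom}(F)$ follows from the same tiling argument applied inside $kO$; the Lipschitz regularity of $\partial O$ ensures that the boundary layer inside $kO$ has measure $\lesssim nk^{d-1}=o(k^d)$. For the opposite bound I would translate so that $O\subset(R/2)Y$ with $R\in\N$ and set $K_k:=kR\in\N$. Given a near-minimizer $\psi_k\in W_0^{1,\infty}(kO)^d$ of $A_k(O,F)$, I extend it to $\tilde\psi_k\in W_0^{1,\infty}(K_kY)^d$ by applying the previous tiling to $K_kY\setminus kO$: translates of $\varphi_0$ on interior $n$-cubes and zero on the residual boundary layer around $\partial(kO)\cup\partial(K_kY)$. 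Since $\tilde\psi_k$ is admissible for $W_{\rm hom}^{(K_k)}(F)$, averaging yields
\[
W_{\rm hom}^{(K_k)}(F) \;\le\; \tfrac{|O|}{R^d}\bigl(A_k(O,F)+o(1)\bigr) + \Bigl(1-\tfrac{|O|}{R^d}\Bigr)\bigl(W_{\rm hom}(F)+\eta\bigr) + o(1).
\]
Passing to $\liminf_k$, using $W_{\rm hom}^{(K_k)}(F)\to W_{\rm hom}(F)$ from the previous paragraph, rearranging, and finally letting $\eta\to0$ gives $\liminf_k A_k(O,F)\ge W_{\rm hom}(F)$.

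The main obstacle is precisely this lower bound on non-cubic $O$: the naive extension of $\psi_k$ by zero outside $kO$ leaves a fixed fraction $1-|O|/R^d$ of the average equal to $\fint W(\cdot,F)\le c(W_{\rm hom}(F)+1)$, which is in general strictly larger than $W_{\rm hom}(F)$ and destroys the bound. Filling the complement with near-minimizers of $W_{\rm hom}^{(n)}$ instead---available because the upper-bound tiling has already been constructed---replaces this wasteful loss by an $\eta$-loss that we can then send to zero.
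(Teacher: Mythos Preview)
Your argument is correct and matches the paper's approach throughout: the pointwise bounds and the case $O=Y$ are handled identically, and for general $O$ both proofs obtain the $\limsup$ by tiling $kO$ with $n$-cubes and the $\liminf$ by extending a near-minimizer on $kO$ to a large cube while filling the complement with near-optimal competitors. The paper packages this last step as the abstract subadditivity inequality $|kQ|\,a_k(Q)\le |kO|\,a_k(O)+|k(Q\setminus\overline O)|\,a_k(Q\setminus\overline O)$ and then invokes the already-proven $\limsup$ bound on $Q\setminus\overline O$, whereas you spell out the explicit tiling of the complement; the content is the same. One small slip: translating $O$ is not harmless, since $A_k(O,F)$ is not translation-invariant (the shift $ka$ need not lie in $\Z^d$, so periodicity does not absorb it); simply choose $R$ large enough that $O\subset RY$ without translating, as the paper does by taking $Q=NY\supset\overline O$.
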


\begin{proof}
We first show that $W_{\rm hom}(F)=+\infty$ if $F\notin \Sigma$, that is $\det(F)\neq 1$. Let $k\in\N$ and $\varphi\in W^{1,\infty}_0(kY)^d$. Since the determinant is a null Lagrangian, we have
\begin{align*}
	0<|\det F-1|=\biggl|\fint_{kY} \det F-1\dy\biggr|=\biggl|\fint_{kY} \det (F+\nabla\varphi)-1\dy\biggr|\leq \fint_{kY}|\det (F+\nabla\varphi)-1|\dy.
\end{align*} 
Hence, there exists a set of positive measure $A\subset kY$ such that $\det (F+\nabla\varphi)\neq 1$ on $A$ and thus 
$$
\fint_{kY}W(y,F+\nabla\varphi(y))\dy=+\infty,
$$
which implies, by the arbitrariness of $k\in\N$ and $\varphi\in W_0^{1,\infty}(kY)^d$, that $W_{\rm hom}(F)=+\infty$ (see \eqref{eq:defW_hom}).
Thus, it suffices to consider $F\in\Sigma$. Next we show the lower bound \eqref{ass:Whomgrowth} on $W_{\rm hom}$. For every $k\in\N$ and $\varphi\in W^{1,\infty}_0(kY)^d$ we find that
\begin{equation}\label{est:keylbwhom}
	c^{-1}|F|-c= c^{-1}\left|\fint_{kY}F+\nabla\varphi\dy\right|-c\leq \fint_{kY}c^{-1}|F+\nabla\varphi|-c\dy\overset{\eqref{eq:W_hom_comparison}}{\leq} \fint_{kY}W(y,F+\nabla\varphi)\dy.
\end{equation}
Taking the infimum over $\varphi$ and then over $k\in\N$, from Definition \eqref{eq:defW_hom} we deduce the lower bound $c^{-1}|F|-c\leq W_{\rm hom}(F)$. 

Finally, we consider a bounded open set $O\subset\R^d$ with Lipschitz-boundary and show \eqref{limit_whombyO}. This will be done by gradual approximation. By a change of variables we have that
\begin{equation*}
	a_k(O):=\inf_{\varphi\in W^{1,\infty}_0(O)^d}\fint_{O}W(ky,F+\nabla\varphi(y))\dy=\inf_{\varphi\in W^{1,\infty}_0(kO)^d}\fint_{kO}W(y,F+\nabla\varphi(y))\dy.
\end{equation*}
We first consider the case $O=Y$ and show that $a_k(Y)\to W_{\rm hom}(F)$. To this end, let $\delta>0$ and take $k_0\in\N$ and $\varphi\in W^{1,\infty}_0(k_0Y)^d$ such that
\begin{equation*}
	W_{\rm hom}(F)\geq \fint_{k_0Y}W(y,F+\nabla\varphi(y))\,\dy-\delta.
\end{equation*}
Consider $k\gg k_0$ and extend $\varphi$ by $k_0Y$-periodicity. We define $\widetilde{\varphi}\in W^{1,\infty}_0(kY)^d$ via
\begin{equation*}
	\widetilde{\varphi}(y)=\begin{cases}
		\varphi(y) &\mbox{if $y\in z+k_0Y\subset kY$ for some $z\in k_0\Z^d$,} \\
		0 &\mbox{otherwise.}
	\end{cases}
\end{equation*}
Then by $k_0Y$-periodicity of $\varphi$ and $W(\cdot,F)$ and the upper bound in \eqref{eq:W_hom_comparison}, we have that
\begin{align*}
	a_k(Y)&\leq \fint_{kY}W(y,F+\nabla\widetilde{\varphi}(y))\,\dy
	\\
	&\leq \frac{1}{k^d}\sum_{\substack{z\in k_0\Z^d\\ z+k_0Y\subset kY}}\int_{z+k_0Y}W(y,F+\nabla\varphi(y))\dy
	\\
	&\quad+\frac{1}{k^d}\sum_{\substack{z\in k_0\Z^d\\ z+k_0Y\cap\partial kY\neq\emptyset}}\int_{z+k_0Y}c(W_{\rm hom}(F)+1)\dx
	\\
	&\leq \frac{(k/k_0)^d}{k^d}\int_{k_0Y}W(y,F+\nabla\varphi(y))\dy
	\\
	&\quad+C\frac{(k+k_0)^{d-1}k_0}{k^d}k_0^d(W_{\rm hom}(F)+1).
\end{align*}
Letting $k\to +\infty$ we find that
\begin{equation*}
	\limsup_{k\to +\infty}a_k(Y)\leq \fint_{k_0Y}W(y,F+\nabla\varphi(y))\dy\leq W_{\rm hom}(F)+\delta.
\end{equation*}
As $\delta>0$ was arbitrary and clearly $a_k(Y)\geq \inf_{n\in\N}a_n(Y)=W_{\rm hom}(F)$, we showed that
\begin{equation}\label{eq:convergenceoncubes}
	W_{\rm hom}(F)=\lim_{k\to +\infty}a_k(Y).
\end{equation}
Now fix an arbitrary bounded, open set $O$ with Lipschitz-boundary. Using the same construction as above with $kY$ replaced by $kO$ and noting that
\begin{equation*}
	\lim_{k\to +\infty}\frac{1}{k^d|O|}\#\{z\in k_0\Z^d:z+k_0Y\cap\partial (k O)\neq\emptyset\}=0
\end{equation*}
due to the Lipschitz-regularity of $\partial O$, we deduce in a similar way that
\begin{equation}\label{eq:limsupanyopen}
	\limsup_{k\to +\infty}a_k(O)\leq W_{\rm hom}(F).
\end{equation}
In order to obtain the reverse inequality for the $\liminf$, let $Q=(-N/2,N/2)^d=NY$ be a large cube that contains $\overline{O}$. Joining any two functions $\varphi_1\in W^{1,\infty}_0(O)^d$ and $\varphi_2\in W^{1,\infty}_0(Q\setminus\overline{O})$ to a function $\varphi\in W^{1,\infty}_0(Q)^d$ one obtains the subadditivity property
\begin{equation*}
	|kQ|a_k(Q)\leq |kO|a_k(O)+|k(Q\setminus\overline{O})|a_k(Q\setminus\overline{O}),
\end{equation*}
where we used that $|\partial O|=0$. Rearranging terms and dividing by $|k O|$, we find that
\begin{align*}
	\liminf_{k\to +\infty}a_k(O)&\geq\liminf_{k\to +\infty}\frac{|Q|}{|O|}a_k(Q)-\limsup_{k\to +\infty}\frac{|Q\setminus\overline{O}|}{|O|}a_k(Q\setminus\overline{O})
	\\
	&\overset{\eqref{eq:limsupanyopen}}{\geq} \liminf_{k\to +\infty}\frac{|Q|}{|O|}a_{Nk}(Y)-\frac{|Q\setminus\overline{O}|}{|O|}W_{\rm hom}(F)\overset{\eqref{eq:convergenceoncubes}}{=}\frac{|Q|}{|O|}W_{\rm hom}(F)-\left(\frac{|Q|}{|O|}-1\right)W_{\rm hom}(F)
	\\
	&=W_{\rm hom}(F).
\end{align*}
This concludes the proof of the lemma.

\end{proof}

In the next lemma, we directly show that $W_{\rm hom}$ is rank-one convex and thus continuous on $\Sigma$.
\begin{lemma}\label{L:Whom:cont}
The function $W_{\rm hom}:\R^{d\times d}\to[0,+\infty]$ is rank-one convex. Hence, as $W_{\rm hom}$ is finite on $\Sigma=\{F\in\R^{d\times d}:\,\det(F)=1\}$ it is continuous on $\Sigma$.
\end{lemma}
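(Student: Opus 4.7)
The plan is to prove rank-one convexity of $W_{\rm hom}$ via a direct laminate construction that preserves the pointwise constraint $\det=1$, and then to deduce continuity on $\Sigma$.

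For rank-one convexity, fix $F_0, F_1 \in \R^{d\times d}$ with $F_1 - F_0 = a\otimes b$, $t \in (0,1)$, and $F_t := (1-t)F_0 + tF_1$. If either $F_i \notin \Sigma$ the inequality is trivial by Lemma~\ref{L:generalcellformula}. Otherwise, the identity $\det(F_0 + s\,a\otimes b) = 1 + s\, b^T F_0^{-1} a$ together with $\det F_1 = 1$ forces $b^T F_0^{-1} a = 0$, so the whole segment $F_s = F_0 + s\,a\otimes b$ lies in $\Sigma$.

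Given $\delta > 0$, pick near-optimal $\varphi_i \in W_0^{1,\infty}(k_iY)^d$, extended $k_iY$-periodically, with $\fint_{k_iY} W(y, F_i + \nabla\varphi_i)\,\dy \leq W_{\rm hom}(F_i) + \delta$. Using the freedom in Lemma~\ref{L:generalcellformula} to pick any Lipschitz domain $O$, I would take $O$ as a parallelepiped with two faces orthogonal to $b$ whose $b$-extent is an integer multiple of a small laminate period $\lambda$. The base laminate $\psi(y) := \lambda h(b\cdot y/\lambda)a$, where $h$ is the $1$-periodic piecewise-linear function satisfying $h' = -t$ on an $(1-t)$-fraction and $h' = 1-t$ on a $t$-fraction of each period (with $h(0)=0$), satisfies $F_t + \nabla\psi \in \{F_0, F_1\} \subset \Sigma$ pointwise and vanishes on the two $b$-faces of $\partial O$. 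The lateral faces are handled by introducing a thin auxiliary piecewise-affine buffer whose rank-one jumps are drawn from the tangent cones $\{a'\otimes b' : b'^T F_i^{-1} a' = 0\} \subset T_{F_i}\Sigma$; since these span $T_{F_i}\Sigma$, a finite sequence of rank-one compatible layers can match the zero Dirichlet data without leaving $\Sigma$. Inside each $F_i$-slab of the base laminate, add the oscillation $\varphi_i$ on a sub-region composed of entire translated $k_iY$-cells, using the Dirichlet condition on $\varphi_i$ to ensure continuity.

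The energy decomposes into: bulk contributions from the $\varphi_i$-enhanced regions, giving density $\leq W_{\rm hom}(F_i) + \delta$ by $k_iY$-periodicity; and ``error'' contributions from cell collars and lateral buffer, where the gradient stays in $\Sigma$ with density $\leq c(W_{\rm hom}(F_i) + 1)$ via \eqref{eq:W_hom_comparison} and relative volume $\to 0$ as $\lambda$ and $|O|$ are tuned. Passing to the limit in $k$ via \eqref{limit_whombyO} and then $\delta \to 0$ yields rank-one convexity. For continuity on $\Sigma$: the rank-one directions $\{a\otimes b : b^T F^{-1}a = 0\}$ in $T_F\Sigma$ span $T_F\Sigma$ at every $F\in\Sigma$ (check at $F=I$, where they include all $a\otimes b$ with $a\perp b$ and hence span the trace-free matrices, then transport by $H\mapsto FH$). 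Thus any two nearby points of $\Sigma$ are joined by a short chain of rank-one segments staying in $\Sigma$; rank-one convexity along each piece, combined with finiteness on $\Sigma$ and the lower bound \eqref{ass:Whomgrowth}, yields local Lipschitz and hence continuity.

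\textbf{Main obstacle.} The crux is that the competitor must satisfy $\det=1$ \emph{pointwise}, so smooth cutoffs to enforce Dirichlet data on $\partial O$ cannot be used. The resolution is a fully piecewise-affine construction with all gradients in $\Sigma$ and only rank-one compatible interfaces; the lateral boundary is absorbed by an auxiliary laminate built from rank-one directions in $T_{F_i}\Sigma$, whose span property is the geometric input that makes the whole scheme work.
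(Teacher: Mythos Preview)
Your overall strategy---build an $\Sigma$-valued piecewise-affine laminate with affine boundary data, insert near-optimal correctors in the bulk, and pass to the limit via \eqref{limit_whombyO}---is exactly the route the paper takes. The paper, however, does not attempt the lateral-buffer construction by hand: it invokes \cite[Theorem~2.1]{Conti08}, which directly furnishes a polyhedron $\Omega_P$, a finite set $K\subset\Sigma$, and a piecewise-affine $u$ with $u(x)=Fx$ on $\partial\Omega_P$, $\nabla u\in K$ a.e., and $|\{\nabla u\notin\{A,B\}\}|\le\delta|\Omega_P|$. This is precisely the step you flag as the ``main obstacle'', and your resolution is incomplete.

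The gap is this: the observation that rank-one directions $\{a'\otimes b':b'^{\!T}F_i^{-1}a'=0\}$ span $T_{F_i}\Sigma$ is a \emph{linear} fact about tangent vectors, but what you need is a \emph{nonlinear} differential-inclusion statement---a Lipschitz map on the buffer with gradient in $\Sigma$, equal to the laminate on the inner face and to $F_t x$ on the lateral face. Spanning the tangent space is necessary but far from sufficient: the buffer must interpolate between \emph{distinct} matrices $F_0,F_1,F_t$ (not infinitesimally close ones), all gradient jumps must be rank-one, and the construction must close up to match both boundary data exactly. Conti's theorem achieves this via an inductive refinement scheme that is nontrivial to reproduce; simply asserting that ``a finite sequence of rank-one compatible layers can match the zero Dirichlet data'' does not constitute a proof. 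Your continuity argument has the same flavour: rank-one convexity plus the span property does yield continuity on $\Sigma$, but the clean way to see it (and the way the paper cites) is to pass to local coordinates on $\Sigma$ in which rank-one convexity becomes separate convexity---again the content of \cite[proof of Theorem~3.1, Step~2]{Conti08}. In short, your plan is correct but you have reinvented the \emph{statement} of Conti's construction without supplying its \emph{proof}; citing it, as the paper does, would close the gap.
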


\begin{proof}
We first show that $W_{\rm hom}$ is rank-one convex, following the closely related arguments of \cite[Lemma 4.1]{CoDo}. Fix $A,B\in \Sigma$ with ${\rm rank}(A-B)=1$ and $\lambda \in[0,1]$. Moreover, set $F=\lambda A+(1-\lambda)B$. Appealing to \cite[Theorem 2.1]{Conti08}, we find a finite set $K\subset \Sigma$ such that for every $\delta>0$ the following is true: there exists a polyhedron $\Omega_P$ and $u\in W^{1,\infty}(\Omega_P)^d$ satisfying
\begin{align}\label{L:2:eq1}
u(x)=Fx\mbox{ on $\partial \Omega_P$},\quad \nabla u\in K\mbox{ a.e.\ in $\Omega_P$,}\quad |\{\nabla u(x)\notin \{A,B\}\}|\leq \delta|\Omega_P|.
\end{align}
We claim that there exists $c=c(A,B)>0$ such that 
\begin{equation}\label{eq:measurebound}
|\{\nabla u=A\}|\leq (\lambda+c\delta)|\Omega_P|\quad\mbox{and}\quad|\{\nabla u=B\}|\leq (1-\lambda+c\delta)|\Omega_P|.
\end{equation}
Indeed, due to the boundary condition satisfied by $u$ we have that
\begin{equation*}
\lambda A+(1-\lambda)B=\fint_{\Omega_P}F\dx=\fint_{\Omega_P}\nabla u\dx=A\frac{|\{\nabla u=A\}|}{|\Omega_P|}+B\frac{|\{\nabla u=B\}|}{|\Omega_P|}+\frac{1}{|\Omega_P|}\int_{\{\nabla u\notin\{A,B\}\}}\nabla u\dx.
\end{equation*}
Appealing to \eqref{L:2:eq1}, we can bound the norm of the last integral by $\delta\max_{\xi\in K}|\xi|$. Moreover, since $A,B$ are invertible and ${\rm rank}(A-B)=1$, these matrices are linearly independent and by standard linear algebra we find a matrix $C\in\R^{d\times d}$ with norm $1$ and such that $\langle A, C\rangle=0$ and $\langle B, C\rangle\neq 0$. Taking the scalar product of the above equality with the matrix $C$ we find that
\begin{equation*}
	|\{\nabla u=B\}|\leq (1-\lambda+\delta\langle B,C\rangle^{-1}\max_{\xi\in K}|\xi|)|\Omega_P|.
\end{equation*}
A similar argument gives the claimed bound on the measure of $\{\nabla u=A\}$, so that \eqref{eq:measurebound} holds. 
Finally, still due to \cite[Theorem 2.1]{Conti08} the sets $\{\nabla u=A\}$, $\{\nabla u=B\}$ and $\{\nabla u \notin\{A,B\}\}$ are the union of simplices $\omega_j^A$, $j=1,\dots,N_A$, $\omega_j^B$, $j=1,\dots,N_B$ and $\omega_j^C$, $j=1,\dots,N_C$ and on each of these simplices $u$ is affine. For each $k\in\mathbb N$ and $j\in\{1,\dots,N_A\}$, we find $\varphi_{j,k}^A \in W_0^{1,\infty}(\omega_j^A)^d$ such that
\begin{equation}\label{L:whomcontim:ub:A}
\fint_{\omega_j^A}W(ky,A+\nabla \varphi_{j,k}^A)\dy\leq\inf_{\varphi\in W_0^{1,\infty}(\omega_j^A)^d}\fint_{\omega_j^A}W(ky,A+\nabla \varphi)\dy+\delta.
\end{equation}
Analogously, we find for each $k\in\mathbb N$ and $j\in\{1,\dots,N_B\}$, functions $\varphi_{j,k}^B \in W_0^{1,\infty}(\omega_j^B)^d$ such that
\begin{equation}\label{L:whomcontim:ub:B}
\fint_{\omega_j^B}W(ky,B+\nabla \varphi_{j,k}^B)\dy\leq\inf_{\varphi\in W_0^{1,\infty}(\omega_j^B)^d}\fint_{\omega_j^B}W(ky,B+\nabla \varphi)\dy+\delta.
\end{equation}
Extending $\varphi_{j,k}^A$ and $\varphi_{j,k}^B$ by zero and using $u(x)=F(x)$ on $\partial \Omega_P$, we can define $w_k\in W_0^{1,\infty}(\Omega_P)^d$ by
\begin{equation}
w_k(x)=u(x)-Fx+\sum_{j=1}^{N_A}\varphi_{j,k}^A(x) +\sum_{j=1}^{N_B}\varphi_{j,k}^B(x).
\end{equation} 
In view of Lemma~\ref{L:generalcellformula}, one the one hand we have that
\begin{equation}\label{L:whomcontim:lb}
W_{\rm hom}(F)\leq \liminf_{k\to+\infty} \fint_{\Omega_P}W(ky,F+\nabla w_k)\dy\,.
\end{equation}
On the other hand we can write
\begin{align}
\fint_{\Omega_P}W(ky,F+\nabla w_k)\dy=&\frac1{|\Omega_P|}\sum_{j=1}^{N_A}\int_{\omega_j^A}W(ky,A+\varphi_{j,k}^A)\dy+\frac1{|\Omega_P|}\sum_{j=1}^{N_B}\int_{\omega_j^B}W(ky,B+\varphi_{j,k}^B)\dy\notag\\
&+\frac{1}{|\Omega_P|} \sum_{j=1}^{N_C}\int_{\omega_{j}^C}W(ky,G_j)\dy,
\end{align}
for some $G_j\in K$ for all $j=1,\dots,N_C$. Estimates \eqref{eq:measurebound}, \eqref{L:whomcontim:ub:A}, \eqref{L:whomcontim:ub:B} and Lemma~\ref{L:generalcellformula} yield
\begin{align}\label{L:whomcontim:ub}
&\limsup_{k\to+\infty}\fint_{\Omega_P}W(ky,F+\nabla w_k)\dy\notag\\
\leq&\frac1{|\Omega_P|}\sum_{j=1}^{N_A}|\omega_j^A|(W_{\rm hom}(A)+\delta)+\frac1{|\Omega_P|}\sum_{j=1}^{N_B}|\omega_j^B|(W_{\rm hom}(B)+\delta)+\frac{1}{|\Omega_P|} \sum_{j=1}^{N_C}|\omega_{j}^C|\fint_YW(y,G_j)\dy\notag\\
\leq&(\lambda+c\delta)(W_{\rm hom}(A)+\delta)+(1-\lambda+c\delta)(W_{\rm hom}(B)+\delta)+\delta \max_{G\in K}\fint_YW(y,G)\dy.
\end{align}
Combining \eqref{L:whomcontim:lb}, \eqref{L:whomcontim:ub}, and the upper bound on $W$ via $W_{\rm hom}$ we find
$$
W_{\rm hom}(F)\leq \lambda W_{\rm hom}(A)+(1-\lambda)W_{\rm hom}(B)+C_\delta,
$$
where $C_\delta\to0$ as $\delta\to0$. Hence, the claim on rank-one convexity follows.

Finally, the continuity of $W_{\rm hom}$ on $\Sigma$ can be deduced verbatim as in \cite[Step 2 of the proof of Theorem 3.1]{Conti08} by showing that $W_{\rm hom}$ is separately convex in suitable local variables.
\end{proof}

In the next lemma, we prove that the examples mentioned in Remark~\ref{r.onAss} indeed satisfy Assumption~\ref{ass}.
\begin{lemma}\label{L:pqexample}
Let $W:\R^d\times \Sigma\to[0,+\infty)$ be a Carath\'eodory-function such that $W(\cdot,F)$ is $Y$-periodic for every $F\in\Sigma$. Moreover, assume that $W$ satisfies one of the following two conditions:
\begin{enumerate}[(I)]
	\item (standard $p$-growth): There exist $p\geq 1$ and  $c\geq 1$ such that for a.e. $x\in\R^d$ and all $F\in\Sigma$
\begin{align}
\frac{1}{c}|F|^p-c&\leq W(x,F)\leq c(|F|^p+1),\label{ineq:pgrowth:ass0}
\end{align}
	\item (additional dependence on cofactors): There exist $p,q\geq 1$ and  $c\geq 1$ such that for a.e. $x\in\R^d$ and all $F\in\Sigma$
	\begin{align}
\frac{1}{c}\left(|F|^p+|{\rm adj}F|^q\right)-c\leq W(x,F)\leq c\left(|F|^p+|{\rm adj}F|^q\right)+c,\label{ineq:pgrowth:ass1}
\end{align}
	where ${\rm adj}F$ denotes the adjunct matrix of $F$.
	\end{enumerate}
Then $W$ satisfies Assumption~\ref{ass}.
\end{lemma}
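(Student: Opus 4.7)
The plan is to verify directly each of the nontrivial conditions of Assumption~\ref{ass} in both cases (I) and (II). The lower bound $\tfrac{1}{c}|F|-c\leq W(x,F)$ is immediate from the elementary inequality $|F|^{p}\geq |F|-1$, valid for every $p\geq 1$, combined with a harmless adjustment of constants.

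For the upper bound $W(x,F)\leq c(W_{\rm hom}(F)+1)$ in \eqref{eq:W_hom_comparison}, my strategy is to first derive matching lower bounds on $W_{\rm hom}$ via Jensen's inequality and then combine them with the pointwise upper bounds on $W$. For any $k\in\N$ and $\varphi\in W^{1,\infty}_{0}(kY)^{d}$, the mean of $F+\nabla\varphi$ equals $F$, so convexity of $|\cdot|^{p}$ for $p\geq 1$ yields $\fint_{kY}|F+\nabla\varphi|^{p}\dy\geq |F|^{p}$. In case (I) this gives $W_{\rm hom}(F)\geq \tfrac{1}{c}|F|^{p}-c$, which combined with the assumed upper bound $W(x,F)\leq c(|F|^{p}+1)$ closes the argument (at the price of a larger constant). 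In case (II) the same step handles the $|F|^{p}$ contribution; for the cofactor term I would exploit that each entry of ${\rm adj}(\nabla u)$ is a minor of order $d-1$, hence a null Lagrangian, so that $\fint_{kY}{\rm adj}(F+\nabla\varphi)\dy={\rm adj}\,F$ for every $\varphi\in W^{1,\infty}_{0}(kY)^{d}$ (the case $d=2$ being trivial since ${\rm adj}$ is then linear). Applying Jensen to the convex norm $|\cdot|^{q}$ then delivers $\fint_{kY}|{\rm adj}(F+\nabla\varphi)|^{q}\dy\geq |{\rm adj}\,F|^{q}$, and one concludes as before.

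For the submultiplicative bound \eqref{eq:submultiplicative} I would use the elementary matrix inequalities $|FG|\leq |F|\,|G|$ and, in case (II), the identity ${\rm adj}(FG)={\rm adj}(G)\,{\rm adj}(F)$, which is valid since $F,G\in\Sigma$ are invertible, whence $|{\rm adj}(FG)|\leq |{\rm adj}\,F|\,|{\rm adj}\,G|$. Together with the pointwise upper bound on $W$ this produces estimates of the form $W(x,FG)\leq c(|F|^{p}+|{\rm adj}\,F|^{q})(|G|^{p}+|{\rm adj}\,G|^{q})+c$ (and analogously in case (I)); invoking the already established lower bound on $W$ to convert the polynomial expressions $|F|^{p}+|{\rm adj}\,F|^{q}$ into a multiple of $W(x,F)+1$ (and similarly for $G$) then yields \eqref{eq:submultiplicative} with a possibly larger constant.

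Finiteness of $W_{\rm hom}$ on $\Sigma$ comes for free by testing the cell formula \eqref{eq:defW_hom} with the zero corrector: for $F\in\Sigma$ one has $W_{\rm hom}(F)\leq\fint_{Y}W(y,F)\dy<+\infty$ by the assumed pointwise upper bound. I do not foresee any genuine obstacle in the proof; the only subtlety is the observation that Jensen's inequality can be applied to the nonlinear cofactor term in case (II) thanks to the null-Lagrangian property of the $(d-1)$-minors appearing in ${\rm adj}(\nabla u)$.
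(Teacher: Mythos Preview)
Your proposal is correct and follows essentially the same approach as the paper: both verify the submultiplicative bound via $|FG|\leq |F|\,|G|$ and the multiplicativity of the adjugate, and both obtain the upper bound $W(x,F)\leq c(W_{\rm hom}(F)+1)$ by first deriving the matching lower bound $W_{\rm hom}(F)\geq \tfrac{1}{c}|F|^{p}-c$ (and its cofactor analogue in case (II)) through Jensen's inequality combined with the null-Lagrangian property of the $(d-1)$-minors. Your write-up is in fact slightly more explicit than the paper's, which merely sketches these steps and refers back to the computation \eqref{est:keylbwhom}.
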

\begin{proof}
The two-sided growth conditions \eqref{ineq:pgrowth:ass0} and \eqref{ineq:pgrowth:ass1}, together with multiplicativity property of the adjunct matrix (see e.g.~\cite[Proposition 5.66 (i)]{Dac}), imply the submultiplicative upper bound \eqref{eq:submultiplicative} and the lower bound on $W(x,F)$ of \eqref{eq:W_hom_comparison} with a constant depending on $c$ in \eqref{ineq:pgrowth:ass0} and \eqref{ineq:pgrowth:ass1} and in addition on $p$. The only slightly non-trivial part is to verify the upper bound in \eqref{eq:W_hom_comparison}. This follows by the same argument as for the lower bound on $W_{\rm hom}$ in Lemma~\ref{L:generalcellformula}, see \eqref{est:keylbwhom}. Indeed, in the case (I), by Jensen's inequality we have for all $\varphi\in W_0^{1,\infty}(kY)$
\begin{equation}\label{pf:pgrowthwhom}
	c^{-1}|F|^p-c\leq c^{-1}\left|\fint_{kY}F+\nabla\varphi\dy\right|^p-c\leq \fint_{kY}c^{-1}|F+\nabla\varphi|^p-c\dy\overset{\eqref{ineq:pgrowth:ass0}}{\leq} \fint_{kY}W(y,F+\nabla\varphi)\dy.
\end{equation}
Hence, we deduce as in Lemma~\ref{L:generalcellformula} that $c^{-1}|F|^p-c\leq W_{\rm hom}(F)$ and thus
$$
W_{\rm hom}(F)\geq c^{-1}|F|^p-c\stackrel{\eqref{ineq:pgrowth:ass0}}\geq c^{-1}(c^{-1} W(x,F)-1)-c
$$
which yields the upper bound in \eqref{eq:W_hom_comparison}. The above argument can be extended to the case (II) appealing to the fact that $F \mapsto {\rm  adj}F$ is a null Lagrangian (see \cite[Lemma 5.5 (ii)]{Dac}.

%
%
%
%
%
%
\end{proof}
Finally, we recall a version of the well-known Scorza-Dragoni theorem \cite[Theorem 6.35]{FoLe}. This will be important in the next section in order to deal with the fact that $W$ is only assumed to measurable in the spatial variable.
\begin{theorem}\label{T:scorza}
Let $E\subset\R^d$ be a Lebesgue-measurable set and $B\subset\R^m$ be a Borel set, and let $f:E\times B\to \R$ be a Carath\'eodory function. Then for every $\sigma>0$ there exists a closed set $K_{\sigma}\subset E$, with $|E\setminus K_{\sigma}|\leq\sigma$, such that $f$ restricted to $K_{\sigma}\times B$ is continuous.
\end{theorem}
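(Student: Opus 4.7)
The plan is to run the classical two-ingredient Scorza--Dragoni argument: Lusin's theorem applied to a countable dense skeleton of slices, combined with Egorov's theorem applied to a measurable modulus of continuity in the $y$-variable. The Carath\'eodory hypothesis delivers exactly the two features we need: measurability in $x$ (which feeds Lusin) and pointwise continuity in $y$ (which drives the modulus to zero, so Egorov can upgrade pointwise to uniform convergence).

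First, I would pick a countable dense sequence $\{y_n\}_{n\in\N}\subset B$ (available since $B\subset\R^m$ is separable) and, by Lusin's theorem, find closed sets $L_n\subset E$ with $|E\setminus L_n|\leq\sigma/2^{n+2}$ such that $x\mapsto f(x,y_n)$ is continuous on $L_n$. Second, for each $n,m\in\N$ define the modulus
$$h_{n,m}(x):=\sup\bigl\{|f(x,y_k)-f(x,y_n)|:k\in\N,\ y_k\in B,\ |y_k-y_n|<1/m\bigr\},$$
which is a countable supremum of measurable functions and hence measurable. For almost every $x\in E$ the map $y\mapsto f(x,y)$ is continuous on $B$; at such $x$, continuity at $y_n$ combined with density of $\{y_k\}$ gives $h_{n,m}(x)\to 0$ as $m\to\infty$. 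Egorov's theorem together with inner regularity of Lebesgue measure then yields, for each $n$, a closed set $A_n\subset E$ with $|E\setminus A_n|\leq\sigma/2^{n+2}$ on which $h_{n,m}\to 0$ uniformly as $m\to\infty$. Setting $K_\sigma:=\bigcap_{n\in\N}(L_n\cap A_n)$ produces a closed set with $|E\setminus K_\sigma|\leq\sigma$.

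Third, I verify joint continuity of $f$ on $K_\sigma\times B$. Fix $(x_0,y_0)\in K_\sigma\times B$ and $\eta>0$. Using continuity of $f(x_0,\cdot)$, choose $\delta>0$ with $|f(x_0,y)-f(x_0,y_0)|<\eta/3$ whenever $y\in B$, $|y-y_0|<\delta$. Pick $n$ with $|y_n-y_0|<\delta/2$ and $m$ with $1/m<\delta/2$ and $\sup_{x\in K_\sigma}h_{n,m}(x)<\eta/3$ (possible by the uniform convergence on $A_n\supset K_\sigma$). For $(x,y)\in K_\sigma\times B$ sufficiently close to $(x_0,y_0)$ — in particular $|y-y_n|<1/m$ and $|f(x,y_n)-f(x_0,y_n)|<\eta/3$ by continuity of $f(\cdot,y_n)$ on $L_n\supset K_\sigma$ — the triangle inequality
$$|f(x,y)-f(x_0,y_0)|\leq h_{n,m}(x)+|f(x,y_n)-f(x_0,y_n)|+|f(x_0,y_n)-f(x_0,y_0)|<\eta$$
closes the argument.

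The main obstacle is the measurability of the modulus: a naive definition taking a sup over all $y\in B$ with $|y-y_n|<1/m$ is not obviously measurable. Separability of $B$ combined with continuity of $f(x,\cdot)$ (valid for a.e.\ $x$) lets us replace this by the countable sup over the $y_k$, restoring measurability and enabling the use of Egorov. A secondary technicality is that Egorov needs finite measure, so if $|E|=\infty$ one first exhausts $E$ by the bounded pieces $E\cap B_R(0)$ and assembles $K_\sigma$ via a diagonal summation so that the resulting set remains closed with small complement.
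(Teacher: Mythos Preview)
The paper does not prove this theorem; it is simply recalled as a known result with a reference to \cite[Theorem~6.35]{FoLe}. Your argument is the standard Lusin--Egorov proof of Scorza--Dragoni and is essentially correct.

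One small wrinkle deserves attention. In the final triangle-inequality step you bound $|f(x,y)-f(x,y_n)|$ by $h_{n,m}(x)$, but $h_{n,m}$ is defined as a supremum over the countable skeleton $\{y_k\}$, not over all $y\in B$. Passing from the countable supremum to an arbitrary $y$ requires continuity of $f(x,\cdot)$, which the Carath\'eodory hypothesis only guarantees for a.e.\ $x$; your construction of $K_\sigma$ does not automatically exclude the exceptional null set $N$. The fix is routine: either intersect with $E\setminus N$ from the outset (Lebesgue measure is complete, so $N$ is measurable and the measure bound is unaffected), or equivalently define $h_{n,m}$ as the full supremum over $y\in B$ with $|y-y_n|<1/m$, observe it agrees a.e.\ with the countable version and is therefore measurable, and then apply Egorov on $E\setminus N$ so that the resulting $A_n$ avoids $N$. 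With this adjustment the proof goes through exactly as you wrote it.
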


We will use the following consequence of Theorem~\ref{T:scorza}.
\begin{corollary}\label{c.uniformcont}
Suppose that $W$ satisfies Assumption~\ref{ass}. For every $R>0$ and $\sigma,\eta\in(0,1]$ there exists $\rho\in(0,1]$ and a compact set $K_\sigma\subset Y$ satisfying $|Y\setminus K_\sigma|\leq \sigma$ such that
\begin{equation}
\xi,\zeta\in\Sigma \quad\wedge\quad |\xi|,\,|\zeta|\leq R\quad\wedge\quad |\xi-\zeta|\leq \rho \qquad\Rightarrow\qquad \sup_{Y\in \Z^d+K_\sigma}|W(y,\xi)-W(y,\zeta)|\leq \eta.
\end{equation}
\end{corollary}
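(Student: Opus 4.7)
The corollary is a direct consequence of the Scorza-Dragoni theorem applied to the restriction of $W$ to a compact portion of its effective domain. I would first fix $R>0$ and introduce $B_R := \{\xi\in\Sigma:\,|\xi|\leq R\}$. The constraint set $\Sigma$ is closed in $\R^{d\times d}$ as the preimage of $\{1\}$ under the continuous determinant map, so $B_R$ is compact and in particular Borel. By Assumption~\ref{ass}, $W$ restricted to $Y\times B_R$ is a finite-valued Carath\'eodory function. Theorem~\ref{T:scorza}, applied with $E=Y$, $B=B_R$ and the given $\sigma\in(0,1]$, then produces a closed set $K_\sigma\subset Y$ with $|Y\setminus K_\sigma|\leq \sigma$ such that $W|_{K_\sigma\times B_R}$ is jointly continuous. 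By inner regularity of the Lebesgue measure, shrinking $K_\sigma$ if necessary, I may assume $K_\sigma$ is compact while preserving the measure estimate.

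Once this step is secured, the rest is automatic. The product $K_\sigma\times B_R$ is a compact metric space, so the continuous function $W|_{K_\sigma\times B_R}$ is uniformly continuous on it. Therefore, for the given $\eta\in(0,1]$ there exists $\rho=\rho(R,\sigma,\eta)\in(0,1]$ (take the minimum of the corresponding modulus of continuity and $1$) such that $|W(y,\xi)-W(y,\zeta)|\leq \eta$ whenever $y\in K_\sigma$, $\xi,\zeta\in B_R$, and $|\xi-\zeta|\leq\rho$. The extension to $y\in\Z^d+K_\sigma$ follows at once from the $Y$-periodicity of $W(\cdot,F)$ built into Assumption~\ref{ass}: writing $y=z+y'$ with $z\in\Z^d$ and $y'\in K_\sigma$ yields $W(y,\xi)=W(y',\xi)$ and similarly for $\zeta$, so the uniform bound transfers verbatim.

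I do not anticipate any substantial obstacle; the argument is essentially bookkeeping around Theorem~\ref{T:scorza}. The only subtle point worth flagging is the verification of the hypotheses of Scorza-Dragoni, which works precisely because restricting the matrix variable to the Borel set $B_R\subset\Sigma$ removes all the points where $W$ takes the value $+\infty$ and leaves a genuinely real-valued Carath\'eodory integrand to which the theorem applies.
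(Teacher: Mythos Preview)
Your proof is correct and follows essentially the same route as the paper: apply Scorza--Dragoni on $Y$ to obtain a closed (hence compact, since $Y$ is bounded) set $K_\sigma$ on which $W$ is jointly continuous, use uniform continuity on the compact product $K_\sigma\times B_R$, and extend to $\Z^d+K_\sigma$ by periodicity. The only cosmetic difference is that the paper applies Theorem~\ref{T:scorza} with $B=\Sigma$ (on which $W$ is already finite-valued by Assumption~\ref{ass}) and then restricts to $\{|\xi|\leq R\}$ for the compactness step, whereas you take $B=B_R$ from the outset; either choice works, and your remark about inner regularity is harmless but unnecessary since a closed subset of the bounded set $Y$ is automatically compact.
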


\begin{proof}
By Theorem~\ref{T:scorza} with $E=Y$ and $B=\Sigma$ there exists for every $\sigma\in(0,1]$ a closed set $K_\sigma\subset Y$ with $|Y\setminus K_\sigma|\leq \sigma$, such that $W$ restricted to $K_\sigma \times \Sigma$ is continuous and thus uniformly continuous on $K_\sigma\times (\Sigma \cap \{F \in \R^{d\times d}\,:\,|F|\leq R\})$. From this and the periodicity of $W$ the claim follows. 
\end{proof}

%
\section{Construction of a recovery sequence - proof of Theorem~\ref{thm:upperbound}}\label{sec:proofs}
In the next lemma we extend the local construction of \cite[Lemma 4.2]{CoDo} from the setting of relaxation to the one of homogenization that involves the additional micro-scale $\e$.
\begin{lemma}\label{l.localconstruction}
For all $F\in\R^{d\times d}$ and $\eta\in(0,1]$ there exists $\delta>0$ such that the following is true: for every ball $B=B(x_0,r)\subset\R^d$ and for every $u\in W^{1,1}(B)^d$ satisfying
\begin{equation}\label{L:locrec:asssmall}
\fint_B|\nabla u-F|+|W_{\rm hom}(\nabla u)-W_{\rm hom}(F)|\dx\leq \delta,
\end{equation}
there exists a sequence $(z_\e)_\e\subset W^{1,1}(B)^d$ with $z_\e=u$ on $\partial B$ and
\begin{equation}\label{L:conti42:claim1}
\limsup_{\e\to0}\fint_B W(\tfrac{x}\e,\nabla z_\e)\dx\leq \fint_B W_{\rm hom}(\nabla u)+\eta\dx.
\end{equation}
Additionally, there exists $C<+\infty$ depending on the dimension $d$ and the constant $c\in[1,+\infty)$ in Assumption \ref{ass} such that
\begin{equation}\label{L:conti42:claim2}
\limsup_{\e\to0}\fint_B |u-z_\e|\dx\leq Cr \fint_B 1+W_{\rm hom}(\nabla u)\dx.
\end{equation}

\end{lemma}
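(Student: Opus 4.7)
The approach follows the sketch in the introduction and the analogous relaxation argument of \cite[Lemma 4.2]{CoDo}: $z_\e$ is built as a composition $z_\e=u\circ\psi_\e$ with a volume-preserving near-identity $\psi_\e$. If $F\notin\Sigma$ then $W_{\rm hom}(F)=+\infty$ and \eqref{L:locrec:asssmall} forces $W_{\rm hom}(\nabla u)\equiv+\infty$ a.e., making the claim trivial; hence assume $F\in\Sigma$ and thus $\nabla u\in\Sigma$ a.e. Using Lemma~\ref{L:generalcellformula} pick $k\in\N$ and $\varphi\in W_0^{1,\infty}(kY)^d$ with
$$\fint_{kY} W(y,F+\nabla\varphi(y))\dy \leq W_{\rm hom}(F)+\tfrac{\eta}{4},$$
extend $\varphi$ by $kY$-periodicity, denote $G(y):=F+\nabla\varphi(y)$, and set $\psi_\e(x):=x+F^{-1}\e\varphi(x/\e)$. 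Then $\nabla\psi_\e=F^{-1}G(x/\e)$ has determinant $1$ a.e., and $\|\psi_\e-\mathrm{id}\|_\infty\to 0$ as $\e\to 0$ for $k$ fixed. After extending $u$ slightly outside $B$, define $z_\e(x):=u(\psi_\e(x))$; the chain rule gives
$$\nabla z_\e(x)=A_\e(x)\,G(x/\e), \qquad A_\e(x):=\nabla u(\psi_\e(x))F^{-1}\in\Sigma.$$
Since $\psi_\e$ need not be the identity on $\partial B$, on the thin layer $B\setminus B'_\e$ with $B'_\e:=B(x_0,r-h_\e)$, $h_\e\to 0$, $h_\e/\e\to\infty$, I set $z_\e=u$ (gluing via a Dacorogna-Moser-type volume-preserving correction on a thinner intermediate strip); by absolute continuity of the $L^1$ integral and \eqref{eq:W_hom_comparison} the layer contributes $\leq c\int_{B\setminus B'_\e}(W_{\rm hom}(\nabla u)+1)\dx\to 0$.

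The heart of the argument is the energy estimate on $B'_\e$. Apply Corollary~\ref{c.uniformcont} with radius $R\geq|F^{-1}|\,\|G\|_\infty$ and tolerance $\eta/4$, obtaining $\rho>0$ and $K_\sigma\subset Y$, and split $B'_\e=\mathcal{G}_\e\cup(B'_\e\setminus\mathcal{G}_\e)$ with
$$\mathcal{G}_\e:=\{x\in B'_\e:\,|A_\e(x)-I|\leq\rho\ \text{and}\ x/\e\in K_\sigma+\Z^d\}.$$
On $\mathcal{G}_\e$, the identity $\nabla z_\e(x)-G(x/\e)=(A_\e(x)-I)G(x/\e)$ yields $|\nabla z_\e(x)-G(x/\e)|\leq \rho\|G\|_\infty$ with both matrices in $\Sigma\cap \overline{B_R}$, so $W(x/\e,\nabla z_\e(x))\leq W(x/\e,G(x/\e))+\eta/4$. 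Periodic averaging then bounds the resulting integral by $\fint_{kY}W(y,G(y))\dy+\eta/4\leq W_{\rm hom}(F)+\eta/2$. On the complement, two applications of submultiplicativity \eqref{eq:submultiplicative} give
$$W(y,A_\e G)\leq C(1+W(y,\nabla u(\psi_\e(x))))(1+W(y,F^{-1}))(1+W(y,G(y))),$$
and \eqref{eq:W_hom_comparison} combined with continuity of $W_{\rm hom}$ on $\Sigma$ (Lemma~\ref{L:Whom:cont}) makes the last two factors uniformly bounded by some $C_\varphi$; hence the bad contribution is dominated by $C_\varphi\int_{B'_\e\setminus\mathcal{G}_\e}(1+W_{\rm hom}(\nabla u\circ\psi_\e))\dx$. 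Via the area formula and $\det\nabla\psi_\e=1$ this transforms into an integral of $1+W_{\rm hom}(\nabla u)$ on a set of measure $\leq \sigma|B|/|Y|+C|B|\delta/\rho$ (the first summand from periodic averaging of $\mathbf{1}_{Y\setminus K_\sigma}(x/\e)$, the second from Chebyshev on $|\nabla u-F|$ transported by $\psi_\e$), which is small by equi-integrability of $W_{\rm hom}(\nabla u)\in L^1(B)$ after choosing first $\sigma$ and then $\delta$ small.

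Combining both estimates with the hypothesis $\fint_B|W_{\rm hom}(\nabla u)-W_{\rm hom}(F)|\dx\leq\delta$ yields \eqref{L:conti42:claim1}. The $L^1$ bound \eqref{L:conti42:claim2} is comparatively easy: $\fint_B|u-u\circ\psi_\e|\dx\leq\|\psi_\e-\mathrm{id}\|_\infty\fint_B|\nabla u|\dx\to 0$ by translation-continuity, while the lower bound in \eqref{eq:W_hom_comparison} yields $|\nabla u|\leq c(W_{\rm hom}(\nabla u)+c)$, so the much weaker claim $Cr\fint_B(1+W_{\rm hom}(\nabla u))\dx$ holds trivially (the factor $r$ also accommodates the boundary-layer contribution). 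The main obstacle is the bad-set estimate: mere measurability of $W$ in $y$ rules out pointwise continuity and forces the Scorza-Dragoni argument (Corollary~\ref{c.uniformcont}) on the good set, while the multiplicative structure $A_\e G$ compels the use of submultiplicativity \eqref{eq:submultiplicative} on the bad set to separate the error factor $\nabla u(\psi_\e)$ from the bounded main factor $G$; equi-integrability of $W_{\rm hom}(\nabla u)$ is then essential to make the small-measure bound effective. A secondary technicality is the boundary correction while maintaining $\det\nabla\psi_\e=1$, which requires either a Dacorogna-Moser-type volume-preserving adjustment on the boundary layer or the alternative cell formula \eqref{limit_whombyO} with $O=B$ (giving a competitor that already vanishes on $\partial B$).
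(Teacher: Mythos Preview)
Your overall strategy coincides with the paper's: compose $u$ with a volume-preserving near-identity built from a fixed cell corrector $\varphi$, split into a Scorza--Dragoni good set and a bad set, and on the latter use submultiplicativity \eqref{eq:submultiplicative} together with the area formula for the volume-preserving change of variables. The decomposition and the use of Corollary~\ref{c.uniformcont} are exactly as in the paper.

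The genuine gap is in the boundary treatment and the related extension of $u$. Your $\psi_\e$ is not the identity on $\partial B$, and the proposed Dacorogna--Moser correction on a transition strip is not substantiated: you would need a volume-preserving interpolation with a \emph{uniform} Lipschitz bound (so that the strip energy stays controlled via \eqref{eq:submultiplicative} and \eqref{eq:W_hom_comparison}), and this is not a standard output of Dacorogna--Moser. The alternative via \eqref{limit_whombyO} with $O=B$ makes the corrector depend on $B$ and on the scale, so $\|\nabla\varphi\|_\infty$---and hence $\rho$ and ultimately $\delta$---would no longer depend on $(F,\eta)$ alone. ``Extending $u$ slightly outside $B$'' is equally dangerous: a Sobolev extension will not satisfy $\det\nabla u=1$, so $W_{\rm hom}(\nabla u)=+\infty$ on the extension and your bad-set integral blows up. The paper resolves all of this with a single elementary observation: since $\varphi_\eta\in W_0^{1,\infty}(k_\eta Y)$ vanishes on the cell boundary, one sets $v_\e(x)=x+\e F^{-1}\varphi_\eta(x/\e)$ only on the union $B^{(\e)}$ of \emph{full} $\e k_\eta$-cells contained in $B$, and $v_\e={\rm id}$ on $B\setminus B^{(\e)}$; the two pieces match automatically, so no interpolation is needed. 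Then $v_\e={\rm id}$ on $\partial B$ with $\det\nabla v_\e=1$, and Ball's global invertibility theorem \cite{Ball81} makes $v_\e$ bi-Lipschitz from $B$ onto $B$; hence $z_\e:=u\circ v_\e$ is defined without any extension of $u$, and the area-formula step is clean. The $L^1$ bound \eqref{L:conti42:claim2} then follows directly from Poincar\'e's inequality (using $z_\e=u$ on $\partial B$) and the lower bound in \eqref{eq:W_hom_comparison}. Finally, your ``equi-integrability of $W_{\rm hom}(\nabla u)$'' should be replaced by a direct use of the hypothesis $\fint_B|W_{\rm hom}(\nabla u)-W_{\rm hom}(F)|\le\delta$ to compare $W_{\rm hom}(\nabla u)$ with the constant $W_{\rm hom}(F)$ on the small bad set; otherwise the modulus of integrability---and hence $\delta$---would depend on $u$, contradicting the order of quantifiers in the statement.
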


\begin{proof}
In the proof many quantities depend on $F$ and $\eta$, but in order to reduce notation we only indicate the dependence on $\eta$, implicitly allowing for a dependence on $F$, too. By definition of $W_{\rm hom}$, we find $k_{\eta}\in\N$ and $\varphi_\eta\in W_0^{1,\infty}(k_{\eta}Y)^d$ such that
\begin{equation}\label{pf:Lconti42:varphieta}
\fint_{k_{\eta}Y}W(y,F+\nabla \varphi_\eta(y))\dy\leq W_{\rm hom}(F)+\eta.
\end{equation}
We extend $\varphi_\eta\in W_0^{1,\infty}(k_{\eta}Y)^d$ $k_{\eta}Y$-periodically and identify $\varphi_\eta$ with this extension. Moreover, we define $\varphi_{\eta,\e}:=\e\varphi_\eta(\frac{\cdot}\e)$ and set
\begin{equation}
\phi_{\eta,\e}(x):=\begin{cases}Fx+\varphi_{\eta,\e}(x)&\mbox{if $x\in B^{(\e)}$,}\\ Fx&\mbox{otherwise,}\end{cases}
\end{equation}
where 
\begin{equation}\label{eq:defB^eps}
B^{(\e)}:=\bigcup_{\substack{q\in\e k_{\eta} \Z^d\\ q+\e k_{\eta}Y\subset B}} (q+\e k_{\eta} Y).
\end{equation}
By construction we then have that
\begin{equation}\label{pf:L42conti:propphietae}
(i)\;\phi_{\eta,\e}\in W^{1,\infty}(B)^d,\quad (ii)\;\phi_{\eta,\e}(x)=Fx\mbox{ for all $x\in B\setminus B^{(\e)}$}\quad (iii)\;\det \nabla \phi_{\eta,\e}=1\mbox{ a.e. in $B$,}
\end{equation}
where (iii) follows from the fact that in view of \eqref{pf:Lconti42:varphieta} we have  $\det(\nabla \phi_{\eta,\e})=\det(F+\nabla \varphi_\eta(\tfrac{x}{\e}))=1$ a.e.\ on $B^{(\e)}$ and  $\det(\nabla \phi_{\eta,\e})=\det(F)=1$ on $B\setminus B^{(\e)}$. Next, we define
\begin{equation}
v_\e=F^{-1}\phi_{\eta,\e}.
\end{equation}
We argue that $v_{\e}$ is a bi-Lipschitz map. To this end, note that since $\det(\nabla v_{\e})=1$ a.e., for some (actually any) $q>d$ we have that
\begin{equation*}
	\int_B|(\nabla v_{\e}(x))^{-1}|^q\det(\nabla v_{\e}(x))\dx=\int_B |{\rm adj}(\nabla v_{\e}(x))|^q\dx<+\infty.
\end{equation*}
Since further $v_{\e}(x)=x$ on $\partial B$, a direct application of \cite[Theorem 2]{Ball81} yields that $v_{\e}$ is a bi-Lipschitz map from $B$ to itself. In conclusion, we can define
\begin{equation}
z_\e:=u\circ v_\e\in W^{1,1}(B)^d,
\end{equation}
which satisfies $z_\e=u$ on $\partial B$ in the sense of traces. In order to show \eqref{L:conti42:claim1}, we decompose
\begin{equation}
\fint_B W(\tfrac{x}\e,\nabla z_\e)\dx- \fint_B W_{\rm hom}(\nabla u)\dx=(I)_\e+(II)_\e+(III),
\end{equation}
where
\begin{align*}
(I)_\e:=&\fint_B W(\tfrac{x}\e,\nabla z_\e)-W(\tfrac{x}\e,F+\nabla \varphi_{\eta,\e})\dx,\quad
(II)_\e:=&\fint_B W(\tfrac{x}\e,F+\nabla \varphi_{\eta}(\tfrac{x}\e))-W_{\rm hom}(F)\dx,\\
(III):=&\fint_B W_{\rm hom}(F)-W_{\rm hom}(\nabla u)\dx.
\end{align*}
We consider these terms separately. By the $k_{\eta}Y$-periodicity of $W(\cdot,F+\nabla \varphi_{\eta})$, we have that
\begin{equation}\label{lim:IIeps}
\lim_{\e\to0} (II)_\e=\fint_{k_{\eta}Y} W(y,F+\nabla \varphi_{\eta})\dy-W_{\rm hom}(F)\stackrel{\eqref{pf:Lconti42:varphieta}}\leq \eta.
\end{equation}
In order to estimate $(III)$ and $(I)_\e$, we will appeal to the continuity of $\xi\mapsto W(y,\xi)$ (in the sense of Corollary~\ref{c.uniformcont}) and of $W_{\rm hom}$. Note that
\begin{equation}\label{def:Reta}
\|\nabla v_\e\|_{L^\infty(B)}= \|F^{-1}\nabla \phi_{\eta,\e}\|_{L^\infty(B)}\leq |{\rm Id}|+\|F^{-1}\nabla \varphi_\eta\|_{L^\infty(k_{\eta}Y)}=:R_\eta<+\infty
\end{equation}
and
\begin{equation}\label{eq:defMeta}
\|\nabla \phi_{\eta,\e}\|_{L^\infty(B)}\leq  |F|+\|\nabla \varphi_\eta\|_{L^\infty(k_{\eta}Y)}=:M_\eta<+\infty.
\end{equation}
Let
\begin{equation}\label{eq:equiintegrable}
	\sigma_{\eta}=\frac{\eta}{4c(\sup_{\xi\in\Sigma,\,|\xi|\leq R_{\eta}}W_{\rm hom}(\xi)+1)(W_{\rm hom}(F)+1)}.
\end{equation}
Appealing to the continuity of $W_{\rm hom}$ (see Lemma~\ref{L:Whom:cont}) and Corollary~\ref{c.uniformcont}, we find a compact set $C_{\eta}\subset Y$ with $|Y\setminus C_{\eta}|\leq\sigma_{\eta}$ and $\rho_{\eta}\in(0,1)$ sufficiently small such that, for all $\xi,\zeta\in\Sigma$,
\begin{equation}\label{eq:Whomcontinuity}
|\xi-F|\leq \rho_{\eta}\quad\Rightarrow\quad |W_{\rm hom}(\xi)-W_{\rm hom}(F)|\leq \eta
\end{equation}
and
\begin{equation}\label{eq:Wconinuity}
|\xi-\zeta|\leq \rho_{\eta} R_\eta \quad \wedge\quad |\xi|\leq M_\eta  \qquad\Rightarrow\qquad \|W(\tfrac{\cdot}{\e},\xi)-W(\tfrac{\cdot}{\e},\zeta)\|_{L^{\infty}(\e(\Z^d+ C_{\eta}))}\leq \eta.
\end{equation}

Hence, due to the non-negativity of $W_{\rm hom}(\nabla u)$,
\begin{align}\label{est:IIIeps}
(III)=&\frac1{|B|}\int_{B\cap \{|\nabla u-F|\leq \rho_{\eta}\}}W_{\rm hom}(F)-W_{\rm hom}(\nabla u)\dx+\frac1{|B|}\int_{B\cap \{|\nabla u-F|> \rho_{\eta}\}}W_{\rm hom}(F)-W_{\rm hom}(\nabla u)\dx\notag\\
\overset{\eqref{eq:Whomcontinuity}}{\leq}&\eta+\frac{W_{\rm hom}(F)}{\rho_{\eta}}\fint_{B}|\nabla u-F|\dx\notag\\
\leq &\eta +\frac{W_{\rm hom}(F)\delta}{\rho_{\eta}}.
\end{align}
To estimate $(I)_\e$, we decompose $B$ as $B=B^{(\e)}\cup (B\setminus B^{(\e)})$ with $B^{(\e)}$ defined in \eqref{eq:defB^eps}. On $B\setminus B^{(\e)}$ it holds that $z_{\e}=u$, so that by the non-negativity of $W$ we have
\begin{align*}
\limsup_{\e\to0}\int_{B\setminus B^{(\e)}}W(\tfrac{x}\e,\nabla z_\e)-W(\tfrac{x}\e,F+\nabla \varphi_{\eta}(\tfrac{x}\e))\dx&\leq\limsup_{\e\to 0} \int_{B\setminus B^{(\e)}}W(\tfrac{x}{\e},\nabla u)\dx
\\
&\overset{\eqref{eq:W_hom_comparison}}{\leq} \limsup_{\e\to 0} c\int_{B\setminus B^{(\e)}}W_{\rm hom}(\nabla u)+1\dx=0,
\end{align*}
where we used that $|B\setminus B^{(\e)}|\to 0$ as $\e\to 0$ for the last equality.
On $B^{(\e)}$, we have that
\begin{equation}\label{eq:nablazeps}
\nabla z_\e=((\nabla u)\circ v_\e)\nabla v_\e=((\nabla u-F)\circ v_\e)\nabla v_\e+F+\nabla \varphi_\eta(\tfrac{\cdot}\e).
\end{equation}
Set $\omega_\e:=\{x\in B^{(\e)}\,|\, |(\nabla u-F)\circ v_\e|>\rho_{\eta}\}$. Equation \eqref{eq:nablazeps} implies that on $B\setminus \omega_\e$
\begin{equation}\label{eq:nablazeps1}
|\nabla z_\e-(F+\nabla \varphi_\eta(\tfrac{\cdot}\e))|\leq \|\nabla v_\e\|_{L^\infty(B)}|(\nabla u-F)\circ v_\e)|\stackrel{\eqref{def:Reta}}\leq \rho_\eta R_\eta
\end{equation}
and thus the continuity property \eqref{eq:Wconinuity} and the definition of $M_{\eta}$ in \eqref{eq:defMeta} yield that
\begin{equation}
\|W(\tfrac{\cdot}{\e},\nabla z_\e)-W(\tfrac{\cdot}{\e},F+\nabla \varphi_\eta(\tfrac{\cdot}\e))\|_{L^\infty(B\setminus \omega_\e\cap(\e(\Z^d+ C_{\eta})))}\leq \eta
\end{equation}
which yields
\begin{equation}\label{est:finalIepsa}
\frac{1}{|B|}\int_{B\setminus\omega_\e\cap(\e(\Z^d+C_{\eta}))}W(\tfrac{x}\e,\nabla z_\e)-W(\tfrac{x}\e,F+\nabla \varphi_{\eta,\e})\dx\leq  \eta.
\end{equation}

To bound the contribution from the set $B\setminus \omega_{\e}\cap(\e(\Z^d+Y\setminus C_{\eta}))$, we use \eqref{eq:nablazeps}, the non-negativity of $W$ and its submultiplicative bound, see \eqref{eq:submultiplicative}, to deduce that
\begin{eqnarray}\label{eq:applyarea}
	& &\quad \frac{1}{|B|}\int_{B\setminus \omega_{\e}\cap(\e(\Z^d+Y\setminus C_{\eta}))}W(\tfrac{x}{\e},\nabla z_{\e})-W(\tfrac{x}{\e},F+\nabla\varphi_{\eta,\e})\dx\nonumber
	\\
	&\stackrel{\eqref{eq:submultiplicative}}\leq& \frac{c}{|B|}\int_{B\cap(\e(\Z^d+Y\setminus C_{\eta}))}(W(\tfrac{x}{\e},\nabla v_{\e})+1)(W(\tfrac{x}{\e},\nabla u\circ v_{\e})+1)\dx\nonumber
	\\
	&\stackrel{\eqref{eq:W_hom_comparison}}\leq&  \frac{4c^2}{|B|}\int_{B\cap(\e(\Z^d+Y\setminus C_{\eta}))}(W_{\rm hom}(\nabla v_{\e})+1)(W_{\rm hom}(\nabla u\circ v_{\e})+1)\dx\nonumber
	\\
	&\stackrel{\eqref{def:Reta}}\leq& \underbrace{\frac{4c^2}{|B|}\sup_{\xi\in\Sigma,\,|\xi|\leq R_{\eta}}(W_{\rm hom}(\xi)+1)}_{=:\tfrac{c_{\eta}}{|B|}}\int_{B\cap(\e(\Z^d+Y\setminus C_{\eta}))}(W_{\rm hom}(\nabla u\circ v_{\e})+1)\dx\nonumber
	\\
	&=&\frac{c_\eta}{|B|}\int_{v_{\e}(B\cap (\e(\Z^d+Y\setminus C_{\eta})))}(W_{\rm hom}(\nabla u)+1)\dx\nonumber 
	\\
	&\leq& c_{\eta}\fint_B |W_{\rm hom}(\nabla u)-W_{\rm hom}(F)|\dx+\frac{c_{\eta}}{|B|}\int_{v_{\e}(B\cap (\e(\Z^d+Y\setminus C_{\eta})))}W_{\rm hom}(F)+1\dx\nonumber
	\\
	&\leq& c_{\eta}\delta+\frac{c_{\eta}}{|B|}|v_{\e}(B\cap (\e(\Z^d+Y\setminus C_{\eta})))|(W_{\rm hom}(F)+1),
\end{eqnarray}
where we used the area formula for the bi-Lipschitz transformation $v_{\e}:B\to B$ that satisfies $\det(\nabla v_{\e})=1$ from the fourth to the fifth line and $v_\e(B)=B$ from the fifth to the sixth line. Again by the area formula, the Riemann-Lebesgue Lemma and the bound $|Y\setminus C_\eta|\leq \sigma_\eta$, we have that
\begin{equation*}
	|v_{\e}(B\cap (\e(\Z^d+Y\setminus C_{\eta})))|=|(B\cap (\e(\Z^d+Y\setminus C_{\eta})))|\overset{\e\to 0}{\rightarrow} |Y\setminus C_{\eta}||B|\leq \sigma_{\eta}|B|.
\end{equation*}
Hence, from the choice of $\sigma_{\eta}$ (cf. \eqref{eq:equiintegrable}) and \eqref{eq:applyarea} we infer that
\begin{equation}\label{eq:finalIepsc}
	\limsup_{\e\to 0}\frac{1}{|B|}\int_{B\setminus \omega_{\e}\cap(\e(\Z^d+Y\setminus C_{\sigma}))}W(\tfrac{x}{\e},\nabla z_{\e})-W(\tfrac{x}{\e},F+\nabla\varphi_{\eta,\e})\dx\leq c_{\eta}\delta+\eta.
\end{equation}

On the set $\omega_{\e}$, the inequality $|(\nabla u-F)\circ v_{\e}|\geq \rho_{\eta}$ implies that
\begin{align*}
W_{\rm hom}(\nabla u\circ v_{\e})+1&\leq |W_{\rm hom}(\nabla u\circ v_{\e})-W_{\rm hom}(F)|+W_{\rm hom}(F)+1
\\
&\leq |W_{\rm hom}(\nabla u\circ v_{\e})-W_{\rm hom}(F)|+\frac{W_{\rm hom}(F)+1}{\rho_\eta}|(\nabla u-F)\circ v_{\e}|
\end{align*}
and thus, inserting the upper bounds on $W$, we obtain with analogous estimates as in (the the first four lines of) \eqref{eq:applyarea} that
\begin{align}
\int_{\omega_\e}W(\tfrac{x}\e,\nabla z_\e)\dx\leq& c_\eta\int_{\omega_\e}(W_{\rm hom}(\nabla u\circ v_{\e})+1)\dx\notag\\
\leq& \underbrace{c_\eta\biggl(1+\frac{W_{\rm hom}(F)+1}{\rho_{\eta}}\biggr)}_{=:\widetilde{c}_{\eta}}\int_{\omega_\e}|W_{\rm hom}(\nabla u\circ v_{\e})-W_{\rm hom}(F)|+|(\nabla u-F)\circ v_\e|\dx,
\end{align}
where $c_\eta$ is defined as in \eqref{eq:applyarea}. Since $v_\e$ is a bi-Lipschitz map from $B$ onto $B$ with $\det(\nabla v_\e)=1$ a.e., again by the area-formula (using $\omega_\e\subset B$ and $W\geq0$) 
\begin{align}\label{est:finalIepsb}
\int_{\omega_\e}W(\tfrac{x}\e,\nabla z_\e)\dx&\leq \widetilde{c}_{\eta}\int_{B}|W_{\rm hom}(\nabla u\circ v_{\e})-W_{\rm hom}(F)|+|(\nabla u-F)\circ v_\e|\dx\nonumber
\\
&= \widetilde{c}_{\eta}\int_{B}|W_{\rm hom}(\nabla u)-W_{\rm hom}(F)|+|\nabla u-F|\dx\stackrel{\eqref{L:locrec:asssmall}}\leq \widetilde{c}_{\eta}\delta |B|.
\end{align}
Hence, combining \eqref{lim:IIeps}, \eqref{est:IIIeps} \eqref{est:finalIepsa}, \eqref{eq:finalIepsc} and \eqref{est:finalIepsb} we obtain
\begin{align*}
\limsup_{\e\to0}\fint_B W(\tfrac{x}\e,\nabla z_\e)\dx- \fint_B W_{\rm hom}(\nabla u)\dx\leq 4\eta+\frac{W_{\rm hom}(F)\delta}{\rho_\eta}+(c_{\eta}+\widetilde{c}_{\eta})\delta.
\end{align*}
Choosing $\delta>0$ sufficiently small such that $(c_{\eta}+\widetilde{c}_{\eta})\delta+\frac{W_{\rm hom}(F)\delta}{\rho_{\eta}}\leq \eta$, we obtain \eqref{L:conti42:claim1} with $\eta$ replaced by $5\eta$ and the claim follows by redefining $\eta$.

Finally, we show \eqref{L:conti42:claim2}. This follows from Poincar\'e's inequality (using $u-z_\e=0$ on $\partial B$) and the lower bound in \eqref{eq:W_hom_comparison} in the form
\begin{align*}
\fint_{B}|u-z_\e|\dx\leq& Cr\fint_{B}|\nabla (u-z_\e)|
\leq Cr\fint_{B}|\nabla u|+|\nabla z_\e|\dx\\
\leq& Cr\fint_{B}|\nabla u|+cW(\tfrac{x}\e,\nabla z_\e)+c^2\dx
\end{align*}
and thus \eqref{L:conti42:claim2} follows with help of \eqref{L:conti42:claim1} and \eqref{ass:Whomgrowth}.
\end{proof}
With the above local construction, similar to \cite{CoDo} we obtain the global 'recovery' sequence via a covering argument.
\begin{lemma}\label{l.globalconstruction}
Let $u\in W^{1,1}(\Omega)^d$. Then there exists a sequence $u_{\e}\in u+W^{1,1}_0(\Omega)^d$ such that $u_{\e}\to u$ in $L^1(\Omega)^d$ satisfying also
\begin{equation}\label{eq:recovery}
	\limsup_{\e\to 0}\int_{\Omega}W(\tfrac{x}{\e},\nabla u_{\e})\,\dx\leq \int_{\Omega}W_{\rm hom}(\nabla u)\dx.
\end{equation}
If $u\in W^{1,\infty}(\Omega)^d$, then we can assume that $u_{\e}\in W^{1,\infty}(\Omega)^d$, too. 
\end{lemma}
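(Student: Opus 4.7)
The plan is to localize the construction of Lemma~\ref{l.localconstruction} via a Vitali-type covering of $\Omega$ by small balls on which $\nabla u$ is nearly constant, and to leave $u_\e$ equal to $u$ on a small leftover set. This works because the upper bound in \eqref{eq:W_hom_comparison} makes $W(\cdot,\nabla u)$ already comparable to $W_{\rm hom}(\nabla u)\in L^1(\Omega)$ on any such leftover.

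\textbf{Reduction and setup.} If $\int_\Omega W_{\rm hom}(\nabla u)\,dx=+\infty$, the statement holds vacuously with $u_\e\equiv u$. Assume then $W_{\rm hom}(\nabla u)\in L^1(\Omega)$. Since $W_{\rm hom}=+\infty$ off $\Sigma$ (Lemma~\ref{L:generalcellformula}), $\nabla u(x)\in\Sigma$ for a.e.\ $x\in\Omega$. By Lemma~\ref{L:Whom:cont}, $W_{\rm hom}$ is continuous on $\Sigma$, so at a.e.\ $x_0\in\Omega$ the point $x_0$ is a Lebesgue point of $\nabla u$ with $\nabla u(x_0)\in\Sigma$ and the Lebesgue value of $W_{\rm hom}(\nabla u)$ at $x_0$ coincides with $W_{\rm hom}(\nabla u(x_0))$.

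\textbf{Covering and local application.} Fix $n\in\N$. At any such good point $x_0$, for all sufficiently small $r>0$, the ball $B(x_0,r)$ satisfies the smallness condition \eqref{L:locrec:asssmall} with $F=\nabla u(x_0)$ and $\eta=1/n$, where $\delta=\delta(F,1/n)$ is provided by Lemma~\ref{l.localconstruction}. These balls form a Vitali cover of a set of full measure in $\Omega$, so Vitali's theorem yields finitely many pairwise disjoint balls $\{B_i^n\subset\subset\Omega\}_{i=1}^{N_n}$ with radii $r_i^n\leq 1/n$, each satisfying \eqref{L:locrec:asssmall} with its own $F_i^n:=\nabla u(x_i^n)$ and $\eta=1/n$, and with $|\Omega\setminus\bigcup_{i}B_i^n|\leq 1/n$. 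Apply Lemma~\ref{l.localconstruction} on each $B_i^n$ to obtain $z_\e^{n,i}\in W^{1,1}(B_i^n)^d$ with $z_\e^{n,i}=u$ on $\partial B_i^n$, and define $u_\e^n:=z_\e^{n,i}$ on $B_i^n$ and $u_\e^n:=u$ on the complement. The trace matching across each $\partial B_i^n$ ensures $u_\e^n-u\in W^{1,1}_0(\Omega)^d$.

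\textbf{Estimates and diagonal extraction.} Splitting the energy and invoking \eqref{L:conti42:claim1} on each ball together with \eqref{eq:W_hom_comparison} on the leftover gives
\begin{equation*}
\limsup_{\e\to0}\int_\Omega W(\tfrac{x}{\e},\nabla u_\e^n)\,dx\leq \int_\Omega W_{\rm hom}(\nabla u)\,dx+\frac{|\Omega|}{n}+c\int_{\Omega\setminus\bigcup_iB_i^n}\bigl(W_{\rm hom}(\nabla u)+1\bigr)\,dx,
\end{equation*}
and the last term tends to $0$ as $n\to\infty$ by absolute continuity of the integral. Similarly, \eqref{L:conti42:claim2} yields $\limsup_\e\|u_\e^n-u\|_{L^1}\leq (C/n)\int_\Omega(W_{\rm hom}(\nabla u)+1)\,dx$. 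A standard diagonal extraction over $n$ then produces $u_\e\in u+W^{1,1}_0(\Omega)^d$ satisfying $u_\e\to u$ in $L^1(\Omega)^d$ and \eqref{eq:recovery}. The $W^{1,\infty}$ case is preserved because $z_\e^{n,i}=u\circ v_\e^{n,i}$ (from the proof of Lemma~\ref{l.localconstruction}) is Lipschitz whenever $u$ is, since $v_\e^{n,i}$ is bi-Lipschitz.

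\textbf{Main obstacle.} The only subtle step I anticipate is verifying the smallness condition \eqref{L:locrec:asssmall} at a.e.\ point, in particular that $\fint_{B(x_0,r)}|W_{\rm hom}(\nabla u)-W_{\rm hom}(\nabla u(x_0))|\,dx\to 0$. This requires combining the $L^1$ Lebesgue differentiation theorem applied to $W_{\rm hom}(\nabla u)\in L^1(\Omega)$ with the continuity of $W_{\rm hom}$ on $\Sigma$ (Lemma~\ref{L:Whom:cont}) to identify the limit. Everything else — the Vitali cover, the gluing across $\partial B_i^n$, and the diagonal extraction — is standard, and the uncovered leftover is harmless because $W\leq c(W_{\rm hom}+1)$ makes the energy of the unmodified $u$ dominated by an $L^1$ function independent of $\e$.
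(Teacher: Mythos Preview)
Your proof is correct and follows essentially the same approach as the paper: localize Lemma~\ref{l.localconstruction} via a Vitali covering by small balls on which \eqref{L:locrec:asssmall} holds, control the uncovered remainder via \eqref{eq:W_hom_comparison}, and then diagonalize. The only cosmetic difference is that the paper organizes the covering as an inductive refinement (at each step covering half of the residual open set $\Omega_j$ by finitely many balls), whereas you select finitely many balls in one shot for each $n$; the underlying ideas and estimates are identical.
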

\begin{proof}
We can assume that $W_{\rm hom}(\nabla u)\in L^1(\Omega)$. Otherwise we take $u_{\e}=u$.

\step 1 Fix $\eta\in(0,1]$. We claim that for all $j\in\N$ there exist a sequence $(u_\e^j)_\e\subset u+W_0^{1,1}(\Omega)^d$ and an open set $\Omega_j\subset \Omega$ satisfying
\begin{align}
	|\Omega_j|\leq& 2^{-j}|\Omega|\quad\mbox{and}\quad u_\e^j\equiv u\quad\mbox{on $\Omega_j$},\label{L:rec:s1:ind3}\\
	\limsup_{\e\to0}\int_{\Omega\setminus \Omega_j}W(\tfrac{x}{\e},\nabla u_{\e}^j)\dx \leq&\int_{\Omega\setminus \Omega_j}W_{\rm hom}(\nabla u)+\eta\dx,\label{L:rec:s1:ind1}\\
	\limsup_{\e\to0}\int_{\Omega\setminus \Omega_j}|u_{\e}^j-u|\dx\leq& C\eta\int_{\Omega\setminus \Omega_j}1+W_{\rm hom}(\nabla u)\dx,\label{L:rec:s1:ind2}
\end{align}
where $C=C(c,d)\in[1,+\infty)$ is the constant in \eqref{L:conti42:claim2} (in particular it is independent of $\eta$).

We prove the claim by induction. For $j=0$, we set $\Omega_0=\Omega$ and $u_\e^0=u$ and \eqref{L:rec:s1:ind3}--\eqref{L:rec:s1:ind2} hold trivially.

Fix $j\in \N$ and suppose that $(u_\e^j)_\e\subset u+W_0^{1,1}(\Omega)$ and the open set $\Omega_j\subset \Omega$ satisfy \eqref{L:rec:s1:ind3}--\eqref{L:rec:s1:ind2}. Let $L\subset \Omega$ be the set of Lebesgue points of $\nabla u$ and $W_{\rm hom}(\nabla u)$ and for $x\in L$ set $F(x)=\nabla u(x)$. Let $\delta(x)>0$ be given by Lemma \ref{l.localconstruction} for the choice $F=F(x)$ and $\eta>0$. In order to define $u_{\e}^{j+1}$ and the open set $\Omega_{j+1}\subset\Omega$, we choose for any $x\in L\cap\Omega_j$ a radius $r_j(x)\in (0,\eta)$ such that $B_{r_j(x)}(x)\subset\Omega_j$ and
\begin{align*}
&\quad\fint_{B_r(x)}|\nabla u_{\e}^j-F(x)|+|W_{\rm hom}(\nabla u_{\e}^j)-W_{\rm hom}(F(x))|\dy 
\\
&=\fint_{B_r(x)}|\nabla u-F(x)|+|W_{\rm hom}(\nabla u)-W_{\rm hom}(F(x))|\dy\leq \delta(x)
\end{align*}
for all $0<r<r_j(x)$. By the Vitali-Besicovitch covering theorem (\cite[Theorem 1.150 and Remark 1.151]{FoLe}) there exists a countable family of disjoint balls $\{B_{r_k}(x_k)\}_{k\in\N}$ that covers $\Omega_j$ up to a null set. We choose finitely many balls $\{B_{r_k}(x_k)\}_{k=1}^{N_j}$ such that $|\cup_{k=1}^{N_j}B_{r_k}(x_k)|\geq \frac12|\Omega_j|$. In each of these balls we apply Lemma \ref{l.localconstruction} to $F=F(x)$, the given $\eta>0$ and the function $u\in W^{1,1}(B_{r_k}(x_k))^d$ to obtain the corresponding family $z_{\e}^k\in u+W^{1,1}_0(B_{r_k}(x_k))^d$ such that
\begin{align}
	&\limsup_{\e\to 0}\int_{B_{r_k}(x_k)}W(\tfrac{x}{\e},\nabla z_\e^k)\dx\leq \int_{B_{r_k}(x_k)}W_{\rm hom}(\nabla u)+\eta\dx,\label{eq:energyest}
	\\
&\limsup_{\e\to 0}\int_{B_{r_k}(x_k)}|u-z_{\e}^k|\leq Cr_k\int_{B_{r_k}(x_k)}1+W_{\rm hom}(\nabla u)\dx.\label{eq:Lpest}
\end{align} 
Let us define
\begin{equation*}
	u_{\e}^{j+1}:=\begin{cases}
		 u_{\e}^j &\mbox{on $\Omega\setminus \bigcup_{k=1}^{N_j}B_{r_k}(x_k)$,}
		 \\
		 z_{\e}^k &\mbox{on $B_{r_k}(x_k),\, 1\leq k\leq N_j$,}
	\end{cases}\qquad\mbox{and}\qquad\Omega_{j+1}:=\Omega_j\setminus \bigcup_{k=1}^{N_j}\overline{B_{r_k}(x_k)},
\end{equation*}
so that $\Omega_{j+1}$ is open, $u_{\e}^{j+1}=u_{\e}^j=u$ on $\Omega_{j+1}$ and, since $B_{r_k}(x_k)\subset\Omega_j$, the property $u_{\e}^j=u$ on $\Omega_j$ and $z_{\e}^k\in u+W_0^{1,1}(B_{r_k}(x_k))$ imply that $u_{\e}^{j+1}\in u+ W^{1,1}_0(\Omega)^d$. Moreover, we have 
$$
|\Omega_{j+1}|\leq |\Omega_j|-|\bigcup_{k=1}^{N_j}B_{r_k}(x_k)|\leq\frac12|\Omega_j|\leq2^{-(j+1)}|\Omega|
$$
and thus \eqref{L:rec:s1:ind3} is valid for $j+1$. Finally, we show the estimates \eqref{L:rec:s1:ind1} and \eqref{L:rec:s1:ind2} with $j$ replaced by $j+1$. By construction
$$\Omega\setminus \Omega_{j+1}=(\Omega\setminus \Omega_{j})\cup\bigcup_{k=1}^{N_j}\overline{B_{r_k}(x_k)}$$
and thus by \eqref{L:rec:s1:ind1} and \eqref{eq:energyest}
\begin{align*}
	\limsup_{\e\to0}\int_{\Omega\setminus \Omega_{j+1}}W(\tfrac{x}{\e},\nabla u_{\e}^{j+1})\dx &\leq\limsup_{\e\to0}\int_{\Omega\setminus \Omega_j}W(\tfrac{x}{\e},\nabla u_{\e}^j)\dx +\sum_{k=1}^{N_j}\limsup_{\e\to0}\int_{B_{r_k}(x_k)}W(\tfrac{x}{\e},\nabla z_{\e}^k)\dx\\
	&\leq  \int_{\Omega\setminus\Omega_j}W_{\rm hom}(\nabla u)+\eta\dx+\sum_{k=1}^{N_j}\int_{B_{r_k}(x_k)}W_{\rm hom}(\nabla u)+\eta\dx\\
	&=  \int_{\Omega\setminus\Omega_{j+1}}W_{\rm hom}(\nabla u)+\eta\dx
\end{align*}
which proves \eqref{L:rec:s1:ind1} for $j$ replaced by $j+1$. Similarly, exploiting \eqref{L:rec:s1:ind2} and \eqref{eq:Lpest} we have
\begin{align*}
	\limsup_{\e\to0}\int_{\Omega}|u_{\e}^{j+1}-u|\dx\leq&\limsup_{\e\to0}\int_{\Omega\setminus \Omega_j}|u_{\e}^j-u|\dx +\sum_{k=1}^{N_j}\limsup_{\e\to0}\int_{B_{r_k}(x_k)}| z_{\e}^k-u|\dx\\
	\leq& C\eta\int_{\Omega\setminus \Omega_j}1+W_{\rm hom}(\nabla u)\dx+\sum_{k=1}^{N_j}Cr_k\int_{B_{r_k}(x_k)}1+W_{\rm hom}(\nabla u)\dx\\
	\leq& C\eta\int_{\Omega\setminus \Omega_{j+1}}1+W_{\rm hom}(\nabla u)\dx,
\end{align*}
where we used $r_k(x)\in(0,\eta)$ in the last inequality. This concludes the inductive construction.

\step 2 Conclusion. Fix $\eta\in(0,1]$. Appealing to Step~1, we find for every $j\in\mathbb N$ a sequence $(u_\e^j)_\e\subset u+W_0^{1,1}(\Omega)^d$ and an open set $\Omega_j\subset \Omega$ satisfying \eqref{L:rec:s1:ind3},
\begin{align*}
	\limsup_{\e\to0}\int_{\Omega}W(\tfrac{x}{\e},\nabla u_{\e}^j)\dx \leq&\limsup_{\e\to0}\int_{\Omega\setminus \Omega_j}W(\tfrac{x}{\e},\nabla u_{\e}^j)\dx +\limsup_{\e\to0}\int_{\Omega_j}W(\tfrac{x}{\e},\nabla u)\dx	\\
	\leq&\int_{\Omega\setminus \Omega_j}W_{\rm hom}(\nabla u)+\eta\dx +c\int_{\Omega_j}W_{\rm hom}(\nabla u)+1\dx
\end{align*}
and 
\begin{align*}
	\limsup_{\e\to0}\int_{\Omega}|u_{\e}^j-u|\dx\leq& C\eta\int_{\Omega}1+W_{\rm hom}(\nabla u)\dx.
\end{align*}
The above two limits in combination with $W_{\rm hom}(\nabla u)\in L^1(\Omega)$ and \eqref{L:rec:s1:ind3} imply 
\begin{align*}
	\limsup_{j\to+\infty}\limsup_{\e\to0}\int_{\Omega}W(\tfrac{x}{\e},\nabla u_{\e}^j)\dx \leq&\int_{\Omega}W_{\rm hom}(\nabla u)+\eta\dx, \\
	\limsup_{j\to+\infty}\limsup_{\e\to0}\int_{\Omega}|u_{\e}^j-u|\dx\leq& C\eta\int_{\Omega}1+W_{\rm hom}(\nabla u)\dx,
\end{align*}
and the claim follows from the arbitrariness of $\eta\in(0,1]$ and standard diagonal sequence arguments. Finally, if $u\in W^{1,\infty}(\Omega)^d$, then $u_{\e}\in W^{1,\infty}(\Omega)^d$ by the construction via composition in the proof of Lemma \ref{l.localconstruction}.
\end{proof}
\begin{corollary}\label{c.quasiconvex}
The function $W_{\rm hom}:\R^{d\times d}\to [0,+\infty]$ is quasiconvex in the sense of \eqref{eq:defquasiconvex}.
\end{corollary}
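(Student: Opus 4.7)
My plan is to combine the recovery sequence from Lemma \ref{l.globalconstruction} with the multi-cell characterisation \eqref{limit_whombyO} of $W_{\rm hom}$. First I dispose of the case $F\notin\Sigma$, then I prove \eqref{eq:defquasiconvex} for sets $O$ with Lipschitz boundary, and finally I extend to arbitrary bounded open $O$ by zero extension.

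If $\det F\neq 1$, then for any $\varphi\in W^{1,\infty}_0(O)^d$ the null-Lagrangian identity $\fint_O\det(F+\nabla\varphi)\dx = \det F$ forces $\det(F+\nabla\varphi)\neq 1$ on a set of positive measure, so $W_{\rm hom}(F+\nabla\varphi)=+\infty$ there and the right-hand side of \eqref{eq:defquasiconvex} is $+\infty$; hence I may assume $F\in\Sigma$.

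Suppose then that $F\in\Sigma$ and $O$ has Lipschitz boundary. I set $u(x):=Fx+\varphi(x)\in W^{1,\infty}(O)^d$ and invoke Lemma \ref{l.globalconstruction} (using its final sentence to keep the recovery sequence Lipschitz) to obtain $u_\e\in u+W^{1,\infty}_0(O)^d$ satisfying
\begin{equation*}
\limsup_{\e\to0}\int_O W(\tfrac{x}{\e},\nabla u_\e)\dx\leq \int_O W_{\rm hom}(\nabla u)\dx=\int_O W_{\rm hom}(F+\nabla\varphi)\dx.
\end{equation*}
The key observation is that $\psi_\e:=u_\e - F(\cdot)=(u_\e-u)+\varphi$ lies in $W^{1,\infty}_0(O)^d$, so $\psi_\e$ is an admissible competitor in \eqref{limit_whombyO}. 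Restricting to $\e=1/k$ and using \eqref{limit_whombyO},
\begin{equation*}
W_{\rm hom}(F)=\lim_{k\to\infty}\inf_{\psi\in W^{1,\infty}_0(O)^d}\fint_O W(ky,F+\nabla\psi)\dy\leq\limsup_{k\to\infty}\fint_O W(ky,\nabla u_{1/k})\dy,
\end{equation*}
and the right-hand side is bounded by $\fint_O W_{\rm hom}(F+\nabla\varphi)\dx$ via the recovery estimate, yielding \eqref{eq:defquasiconvex} on Lipschitz $O$.

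For an arbitrary bounded open $O$, I fix a cube $Q\supset\overline O$ and extend $\varphi$ by zero to $\tilde\varphi\in W^{1,\infty}_0(Q)^d$. The Lipschitz case applied on $Q$, together with $\nabla\tilde\varphi\equiv 0$ on $Q\setminus O$, gives $|Q|\,W_{\rm hom}(F)\leq |Q\setminus O|W_{\rm hom}(F)+\int_O W_{\rm hom}(F+\nabla\varphi)\dx$, and rearranging produces \eqref{eq:defquasiconvex} on $O$. The only delicate point is to ensure that $\psi_\e$ has the correct boundary trace for the multi-cell formula; this reduces to verifying $u-F(\cdot)=\varphi\in W^{1,\infty}_0(O)^d$. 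The remaining work is essentially the careful application of Lemma \ref{l.globalconstruction} to the specific affine-plus-perturbation data and the subsequence bookkeeping.
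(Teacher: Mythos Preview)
Your proof is correct and follows essentially the same route as the paper: apply Lemma~\ref{l.globalconstruction} to the affine-plus-perturbation map, then use the multi-cell characterisation \eqref{limit_whombyO} from Lemma~\ref{L:generalcellformula} to bound the resulting oscillating energy from below by $W_{\rm hom}(F)$. The paper compresses the lower bound into the phrase ``via a change of variables the left-hand side is bounded from below by $W_{\rm hom}(F)$ due to Lemma~\ref{L:generalcellformula}'' and calls the extension to general $O$ ``standard'', while you spell both steps out explicitly; otherwise the arguments coincide.
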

\begin{proof}
Let $F\in\R^{d\times d}$ and $\varphi\in W^{1,\infty}(B_1)^d$ such that $\varphi(x)=Fx$ on $\partial B_1$. We show that
\begin{equation}\label{ineq:key:whomquasiconvex}
W_{\rm hom}(F)\leq\fint_{B_1}W_{\rm hom}(\nabla\varphi)\dx.
\end{equation}

If $\det(F)\neq 1$, then one can argue as in the proof of Lemma \ref{L:generalcellformula} that
\begin{equation*}
	\fint_{B_1}W_{\rm hom}(\nabla\varphi)\dx=+\infty=W_{\rm hom}(F).
\end{equation*}
Let $\det(F)=1$. By Lemma \ref{l.globalconstruction} we find a sequence $u_{\e}\in W^{1,\infty}(B_1)^d$ such that $u_{\e}(x)=Fx$ on $\partial B_1$ and
\begin{equation*}
	\limsup_{\e\to 0}\fint_{B_1}W(\tfrac{x}{\e},\nabla u_{\e})\dx\leq \fint_{B_1}W_{\rm hom}(\nabla\varphi)\dx.
\end{equation*}
However, via a change of variables the left-hand side is bounded from below by $W_{\rm hom}(F)$ due to Lemma~\ref{L:generalcellformula} and the boundary conditions of $u_{\e}$. This yields the claimed inequality \eqref{ineq:key:whomquasiconvex}. The extension to any bounded, open set is standard.
\end{proof}

\begin{proof}[Proof of Theorem \ref{thm:upperbound}]
	It suffices to combine the Lemmata \ref{L:generalcellformula}, \ref{L:Whom:cont}, and \ref{l.globalconstruction} with Corollary \ref{c.quasiconvex}.
\end{proof}

\begin{remark}[Compressible materials]\label{r.compressible}
As remarked above, Lemma~\ref{l.localconstruction} is a 'homogenization version' of related relaxation results in \cite{CoDo}. As mentioned in the introduction, \cite{CoDo} applies also to compressible materials with a blow up behavior if $\det (F)\to0$. Let us briefly explain why we are not able to extend the methods of \cite{CoDo} to homogenization in the compressible case. In \cite{CoDo}, the authors assume growth conditions of the form $\frac1c |F|^p+\frac1c\theta(\det(F))-c\leq W(F)\leq c(1+|F|^p+\theta(\det(F)))$, where $\theta$ is a convex function which may blow up at zero and satisfies $\theta(ab)\leq (1+\theta(a))(1+\theta(b))$. Assume this growth condition for $W(x,\cdot)$ and let us take the same ansatz as in the proof of Lemma~\ref{l.localconstruction}. At some point, we need to ensure $\theta(\nabla z_\e)\in L^1$. However, by the multiplicative upper bound on $\theta$ and the lower bound on $W$,  $\theta(\nabla z_\e)$ can be bounded from above only by the product of two $L^1$-functions. The trick in \cite{CoDo} is to make a shift in the definition of $v_\e$, that is, consider $v_\e=F^{-1}\phi_{\eta,\e}(\cdot-a)+a$ Then there exists $a=a_{\eta,\e}$ such that $\theta(\nabla z_\e)$ is in $L^1$, but the shift leads to a change of variables and we would need that
$$
\fint_{k_{\eta}Y}W(y+\e^{-1}a_{\eta,\e},F+\nabla \varphi_\eta(y))\dy
$$
is close to $W_{\rm hom}(F)$ which is not true in general. Indeed, due to the periodicity of $W$ one can assume that $\e^{-1}a_{\eta,\e}$ converges to some element $a_0\in\overline{Y}$. With some effort one can show that the above integral then converges to the corresponding version with $\e^{-1}a_{\eta,\e}$ replaced by $a_0$, so that the oscillations of $\nabla\varphi_{\eta}$ are not 'almost optimal' in energy.
\end{remark}

\section{Remarks on the lower bound}\label{sec:LB}

In this final section, we present a lower bound on the $\Gamma$-$\liminf$ of the functional $F_\e$. In order to conserve the constraint of incompressibility, it is in general necessary to assume a stronger lower bound on the stored energy density of the form
\begin{equation}\label{p:growth:1}
\exists c\geq1,\,p\geq d:\quad c^{-1}|F|^p-c\leq W(x,F)\quad\mbox{for all $F\in\Sigma$ and a.e.\ $x\in\R^d$;}
\end{equation}
cf. \cite{KoRiWi}. Since $\Sigma\subset\R^{d\times d}$ is closed, there exists an extension of $W$ from $\R^d\times\Sigma$ to $\R^d\times\R^{d\times d}$ that is continuous in the second variable. We denote this extension by $\widetilde{W}$. Using explicit formulas for such an extension (cf. the proof of \cite[Theorem 7.2]{Deimling}) it follows that $\widetilde{W}$ can be taken as a Carath\'eodory-function $\widetilde{W}:\R^d\times\R^{d\times d}\to [0,+\infty)$ that is periodic in the first variable \footnote{Whenever $W(x,\cdot)$ is polyconvex for a.e. $x\in\R^d$, one can construct a finite, continuous extension that is also polyconvex and non-negative. Indeed, in this case $W(x,F)=w(x,(m(F),1))$ for some convex function $w(x,\cdot)$ defined on $\R^{k-1}\times\{1\}$ for some $k\in\N$ that counts the dimension of all minors and $m(F)$ containing all minors of $F$ of order $\leq d-1$ of $F$. Since $\R^{k-1}\times\{1\}$ is an affine subspace of $\R^k$, there exists an affine projection $p:\R^k\to\R^{k-1}\times\{1\}$. The formula $\widetilde{W}(x,F)=w(x,p(m(F),\det(F)))$ then provides a polyconvex extension of $W(x,\cdot)$ with the claimed properties. \label{footnote:extension}}. Upon replacing $\widetilde{W}$ by $\max\{\widetilde{W},\frac{1}{c}|F|^p-c\}$ we can assume that also $\widetilde{W}$ satisfies 
\begin{equation}\label{p:growth:tilde}
	c^{-1}|F|^p-c\leq \widetilde{W}(x,F)\quad\mbox{for all $F\in\R^{d\times d}$ and a.e.\ $x\in\R^d$,}
\end{equation}
We define $W_n:\R^d\times \R^{d\times d}\to[0,+\infty)$ by
\begin{equation}
	W_n(x,F)=\min\left\{\widetilde W(x,F),n(|F|^p+1)\right\}+n|\det(F)-1|.
\end{equation}
Since $p\geq d$ each $W_n$ satisfies $p$-growth assumptions of the form
\begin{equation*}
	\frac{1}{c}|F|^p-c\leq W_n(x,F)\leq c_n(|F|^p+1).
\end{equation*}
In particular, to each $W_n$ we can apply standard homogenization results (e.g. \cite{Mueller87}) to deduce that 
\begin{equation*}
	F_{\e,n}(u):=\int_{\Omega}W_n(\tfrac{x}{\e},\nabla u)\dx \overset{\Gamma}{\to}\int_{\Omega}\overline W_{n}(\nabla u)\dx
\end{equation*}
on $W^{1,p}(\Omega)^d$ (with respect to the $L^1$-topology) with
\begin{equation}\label{whomn}
	\overline W_{n}(F)=\inf_{k\in\N}\overline W_{n}^{(k)}(F)\quad\mbox{where}\quad \overline W_{n}^{(k)}(F):=\inf_{\varphi\in W^{1,p}_0(kY)^d}\fint_{kY}W_n(x,F+\nabla\varphi(x))\dx.
\end{equation}
Note further that $W_n\leq W_{n+1}$ for all $n\in\N$ and that $W_n\uparrow W$ as $n\to +\infty$. Let us define
\begin{equation}\label{eq:lowerbounddensity}
	\underbar{W}(F):=\sup_{n\in\N}\overline W_{n}(F).
\end{equation}
This integrand yields a lower bound for the $\Gamma\hbox{-}\liminf$ of $F_{\e}$:
\begin{proposition}\label{thm:lowerbound}
	Suppose the assumptions of Theorem~\ref{thm:upperbound} are satisfied. In addition assume \eqref{p:growth:1} and let $\underline{W}$ be defined as above. Then for every family $(u_{\e})_{\e>0}\subset W^{1,1}(\Omega)^d$ such that $u_{\e}\to u$ in $L^1(\Omega)^d$ it holds that
	\begin{equation*}
		\liminf_{\e\to 0}\int_{\Omega}W(\tfrac{x}{\e},\nabla u_{\e}(x))\dx\geq \int_{\Omega}\underbar{W}(\nabla u(x))\dx.
	\end{equation*}
	If the left-hand side is finite, then $u\in W^{1,p}(\Omega)^d$. The map $\underbar{W}$ is finite exactly on $\Sigma$, where it is continuous. Moreover, $\underbar{W}$ is quasiconvex in the sense of \eqref{eq:defquasiconvex}.\\
	Finally, assume in addition that $W(x,\cdot)$ is polyconvex for a.e.\ $x\in \R^d$. Then, for all $F\in\R^{d\times d}$,
	\begin{equation}\label{claim:lb:whomk}
		\forall k\in\N:\qquad \sup_{n\in\N} \overline W_{n}^{(k)}(F)=\inf_{\varphi\in W^{1,p}_0(kY)^d}\fint_{kY}W(x,F+\nabla\varphi(x))\dx=:\overline W^{(k)}(F).
	\end{equation}

\end{proposition}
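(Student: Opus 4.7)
The plan is to derive every assertion from classical $\Gamma$-convergence applied to each truncated integrand $W_n$ (which satisfies standard $p$-growth) together with the pointwise monotone convergence $W_n \nearrow W$. For the lower bound and the regularity $u\in W^{1,p}(\Omega)^d$, I may assume the $\liminf$ is finite and pass to a realizing subsequence along which $\int_\Omega W(\tfrac{x}{\e},\nabla u_\e)\dx$ is uniformly bounded. The coercivity of $W_n$ (inherited from \eqref{p:growth:1} via \eqref{p:growth:tilde}) together with $W_n\leq W$ gives $\nabla u_\e$ bounded in $L^p(\Omega)$, so $u_\e\rightharpoonup u$ in $W^{1,p}(\Omega)^d$ and in particular $u\in W^{1,p}(\Omega)^d$. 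For each fixed $n$, the classical homogenization result \cite{Mueller87} applied to $W_n$ yields
\begin{equation*}
\liminf_{\e\to 0}\int_{\Omega}W(\tfrac{x}{\e},\nabla u_\e)\dx\geq\liminf_{\e\to 0}\int_{\Omega}W_n(\tfrac{x}{\e},\nabla u_\e)\dx\geq\int_{\Omega}\overline W_n(\nabla u)\dx.
\end{equation*}
Since $W_n\leq W_{n+1}$ entails $\overline W_n\leq\overline W_{n+1}$, monotone convergence as $n\to\infty$ replaces the right-hand side by $\int_\Omega \underbar{W}(\nabla u)\dx$.

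For the qualitative properties of $\underbar{W}$: if $F\notin\Sigma$, then for every $\varphi\in W^{1,p}_0(kY)^d$ the null-Lagrangian property of $\det$ on $W^{1,p}$ (valid since $p\geq d$) gives $\fint_{kY}|\det(F+\nabla\varphi)-1|\dx\geq |\det F-1|>0$, so $\overline W_n^{(k)}(F)\geq n|\det F-1|\to+\infty$ and $\underbar{W}(F)=+\infty$. For $F\in\Sigma$, testing with maps $\varphi\in W^{1,\infty}_0(kY)^d\subset W^{1,p}_0(kY)^d$ and using $W_n\leq W$ on $\Sigma$ gives $\overline W_n^{(k)}(F)\leq W_{\rm hom}^{(k)}(F)$, hence $\underbar{W}\leq W_{\rm hom}<+\infty$ on $\Sigma$. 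Each $\overline W_n$ is quasiconvex by the classical theory, and a supremum of quasiconvex functions is quasiconvex via Jensen, so $\underbar{W}$ satisfies \eqref{eq:defquasiconvex}. In particular $\underbar{W}$ is rank-one convex and finite on $\Sigma$, so the separately-convex reparameterization argument used in Lemma~\ref{L:Whom:cont} (via Conti's theorem) transfers verbatim to yield continuity of $\underbar{W}$ on $\Sigma$.

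The polyconvex identity \eqref{claim:lb:whomk} is the most delicate part. The direction ``$\leq$'' is immediate from $W_n\leq W$. For ``$\geq$'', assume $L:=\sup_n\overline W_n^{(k)}(F)<+\infty$ and pick almost-minimizers $\varphi_n\in W^{1,p}_0(kY)^d$ of $\overline W_n^{(k)}$. The $p$-growth lower bound yields boundedness of $\varphi_n$ in $W^{1,p}$, so $\varphi_n\rightharpoonup\varphi$ up to a subsequence; the penalty term gives $\fint_{kY}|\det(F+\nabla\varphi_n)-1|\dx\leq (L+1)/n\to 0$, which combined with weak continuity of $\det$ on $W^{1,p}$ (since $p\geq d$) forces $\det(F+\nabla\varphi)=1$ a.e.\ and makes $\varphi$ admissible for $\overline W^{(k)}(F)$. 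Let $\widetilde W$ be the polyconvex continuous extension from the footnote, modified so that \eqref{p:growth:tilde} holds while preserving polyconvexity, and note $\widetilde W=W$ on $\Sigma$. For $n\geq m$ one has the pointwise bound $W_n\geq T_m(x,G):=\min(\widetilde W(x,G),m(|G|^p+1))$, and $T_m$ is continuous in $G$ with $p$-growth; the sequence $\{F+\nabla\varphi_n\}$ generates a gradient Young measure $\{\nu_x\}_{x\in kY}$, so Fatou's inequality for Young measures yields
\begin{equation*}
L\geq\liminf_{n\to\infty}\fint_{kY}T_m(x,F+\nabla\varphi_n)\dx\geq\fint_{kY}\int_{\R^{d\times d}}T_m(x,G)\,d\nu_x(G)\dx.
\end{equation*}
Sending $m\to\infty$ via monotone convergence replaces $T_m$ by $\widetilde W$ in the inner integral, and Jensen's inequality for polyconvex integrands against gradient Young measures (valid because weak continuity of minors on $W^{1,p}$ with $p\geq d$ identifies the barycentres of the minors with the minors of $F+\nabla\varphi$) gives $\int\widetilde W(x,G)\,d\nu_x(G)\geq\widetilde W(x,F+\nabla\varphi(x))=W(x,F+\nabla\varphi(x))$, so $L\geq\fint_{kY}W(x,F+\nabla\varphi)\dx\geq\overline W^{(k)}(F)$.

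The main obstacle is precisely this last step: the truncations $T_m$ are not themselves polyconvex (a minimum of polyconvex functions need not be polyconvex), which precludes a direct application of Ball's weak lower semicontinuity theorem to $T_m$; the Young-measure detour circumvents this by invoking polyconvexity of $\widetilde W$ only at the final Jensen step, after monotone convergence has eliminated the truncation at level $m$.
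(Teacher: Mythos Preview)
Your proof of the $\liminf$ inequality, the regularity $u\in W^{1,p}(\Omega)^d$, and the qualitative properties of $\underbar W$ (finiteness exactly on $\Sigma$, quasiconvexity, continuity via rank-one convexity and Conti's theorem) follows the paper's argument essentially verbatim; there is nothing to add here.

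For the polyconvex identity \eqref{claim:lb:whomk} your route is correct but genuinely different from the paper's. Both proofs start the same way: pick almost-minimizers $\varphi_n$, extract a weak $W^{1,p}$-limit $\varphi$, and use the penalty term to get $\det(F+\nabla\varphi_n)\to 1$ in $L^1(kY)$, hence $\det(F+\nabla\varphi)=1$ a.e. Both then face the obstacle you correctly name: the truncation $T_m=\min\{\widetilde W,\,m(|\cdot|^p+1)\}$ is not polyconvex, so one cannot directly invoke weak lower semicontinuity at level $m$. The paper resolves this by passing to the polyconvex envelope $\mathcal P T_m$, applying weak lower semicontinuity of the resulting (polyconvex, non-negative) integrand, and then proving $\sup_m\mathcal P T_m=\widetilde W$ via approximation of $\widetilde W$ from below by polyaffine functions with $L^\infty$ coefficients. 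You instead pass to the gradient Young measure $(\nu_x)$ generated by $F+\nabla\varphi_n$, apply the Young-measure lower bound to the Carath\'eodory integrand $T_m$, let $m\to\infty$ by monotone convergence to reach $\langle\nu_x,\widetilde W(x,\cdot)\rangle$, and only then invoke polyconvexity through the classical Jensen inequality for the convex function $g(x,\cdot)$ in $\widetilde W(x,F)=g(x,M(F))$. Your approach avoids the polyaffine-approximation lemma and the explicit study of $\mathcal P T_m$; the paper's approach avoids Young-measure machinery. Both are self-contained.

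One point to tighten: your justification ``weak continuity of minors on $W^{1,p}$ with $p\ge d$ identifies the barycentres of the minors'' is not quite sufficient for the determinant when $p=d$, where weak continuity is only in $\mathcal D'$ and concentration is possible in general. What makes your argument work is that you have already proved $\det(F+\nabla\varphi_n)\to 1$ \emph{strongly} in $L^1$, hence this sequence is equi-integrable, and the fundamental theorem of Young measures then gives $\langle\nu_x,\det\rangle=1$ a.e. You should cite the strong convergence again at the Jensen step rather than generic weak continuity. With this clarification the argument is complete.
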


\begin{proof}
	\step 1 We prove the liminf inequality and the properties of $\underline W$.
	
	Let $u_{\e}\to u$ in $L^1(\Omega)^d$. Then by the lower bound $W\geq W_n$ and the $\Gamma$-convergence result for integral functionals with standard $p$-growth we have that
	\begin{equation*}
		\liminf_{\e\to 0}\int_{\Omega}W(\tfrac{x}{\e},\nabla u_{\e})\dx\geq \liminf_{\e\to 0}\int_{\Omega}W_n(\tfrac{x}{\e},\nabla u_{\e})\dx\geq\int_{\Omega}\overline{W}_{n}(\nabla u)\dx.
	\end{equation*}
	Since $W_n\leq W_{n+1}$ also $\overline{W}_{n}\leq \overline{W}_{n+1}$ and therefore the lower bound of the energy follows by applying the monotone convergence theorem. The lower bound in \eqref{p:growth:tilde} and the same computations as in \eqref{pf:pgrowthwhom} imply the lower bound $\overline{W}_{n}(F)\geq \frac{1}{c}|F|^p-c$ and by monotonicity it also holds for $\underbar{W}$. Hence, if the left-hand side in the above estimate is finite, then $u\in W^{1,p}(\Omega)^d$. As $\underbar{W}\leq W_{\rm hom}$, the finiteness of the latter on $\Sigma$ ensures the same for $\underbar{W}$. If, however, $F\in\R^{d\times d}$ is such that $\det(F)\neq 1$, then
	\begin{align*}
		\underbar{W}(F)&\geq \overline{W}_{n}(F)=\inf_{k\in\N}\inf_{\varphi\in W^{1,p}_0(kY)^d}\fint_{kY}W_n(x,F+\nabla\varphi)\dx
		\\
		&\geq \inf_{k\in\N}\inf_{\varphi\in W^{1,p}_0(kY)^d}n\fint_{kY}|\det(F+\nabla\varphi)-1|\dx\overset{p\geq d}{\geq} n|\det(F)-1|.
	\end{align*}
	Letting $n\to +\infty$, it follows that $\underbar{W}(F)=+\infty$. Next, $\underbar{W}$ is quasiconvex as the supremum of quasiconvex functions (each $\overline{W}_{n}$ is quasiconvex by standard lower semicontinuity results for functionals with polynomial growth; cf. \cite[Theorem 8.1]{Dac}) and finally $\underbar{W}$ is continuous on $\Sigma$ due to \cite[Theorem 1.1]{Conti08}. 
	
	\step 2 We assume that $W(x,\cdot)$ is polyconvex for a.e.\ $x\in \R^d$ and prove \eqref{claim:lb:whomk}. As explained in Footnote \ref{footnote:extension}, in this case we can also assume that $\widetilde{W}(x,\cdot)$ is polyconvex. Fix $k\in\N$. Since $W_n\leq W$, we have $\overline W_n^{(k)}\leq \overline W^{(k)}$ for all $n\in\N$ and thus it suffices to show that $\sup_n \overline W_n^{(k)}\geq \overline W^{(k)}$. If $F\notin\Sigma$, then as in Step~1 one can show that $\sup_{n\in\N}\overline W_n^{(k)}(F)=+\infty$, so that it suffices to consider the case $F\in\Sigma$. For every $n\in\N$ let $\varphi_n\in W_0^{1,p}(kY)^d$ be such that
	$$
	\overline W_{n}^{(k)}(F)\geq \fint_{kY}W_n(x,F+\nabla  \varphi_n)\dx-\frac{1}{n}.
	$$ 
	Then \eqref{p:growth:1}, the almost minimality of $\varphi_n$, and the upper bound in \eqref{eq:W_hom_comparison} imply
	\begin{equation}\label{keyest:varphik:lb}
		\fint_{kY}\frac1c|F+\nabla \varphi_n|^p-c\dx\leq \fint_{kY}W_n(x,F+\nabla  \varphi_n)\dx\leq \fint_{kY}W_n(x,F)+\frac{1}{n}\dx\leq c(W_{\rm hom}(F)+1).
	\end{equation}
	Hence, there exist $\varphi\in W_0^{1,p}(Y)^d$ and a subsequence $(\varphi_{n_j})_j$ such that $\varphi_{n_j}\rightharpoonup \varphi$ in $W^{1,p}(kY)^d$. Next, we investigate the convergence of $\det(F+\nabla\varphi_n)$. From the definition of $W_n$ and almost minimality of $\varphi_n$ we infer that
	\begin{equation*}
		\fint_{kY}|\det(F+\nabla\varphi_n)-1|\dx\leq \frac{1}{n}\fint_{kY}W_n(x,F+\nabla\varphi_n)\dx\leq \frac{1}{n}\fint_{kY}W(x,F)+1\dx\overset{\eqref{eq:W_hom_comparison}}{\leq}\frac{c}{n}\left(W_{\rm hom}(F)+1\right),
	\end{equation*}
	so that $\det(F+\nabla\varphi_n)\to 1$ in $L^1(kY)$. At the same time, due to the divergence structure of the Jacobian, it is known that $\det(F+\nabla\varphi_{n_j})\rightharpoonup\det(F+\nabla\varphi)$ in $\mathcal D'(kY)$, see e.g.\ \cite[Theorem 8.20]{Dac}. Combining these two observations, we find that $\det(F+\nabla\varphi)=1$ a.e. in $kY$ and that $\det(F+\nabla\varphi_{n_j})\to\det(F+\nabla\varphi)$ in $L^1(kY)$. For the lower order minors of $F+\nabla\varphi_{n_j}$ we have at least weak convergence in $L^1(kY)$ to the fact that $p>d-1$. By the monotonicity of $\overline W_n^{(k)}(F)$ with respect to $n$ we have
	\begin{equation*}
		\sup_n \overline W_n^{(k)}(F)=\lim_{j\to\infty}\overline W_{n_j}^{(k)}(F)
		\geq \liminf_{j\to\infty}\fint_{kY}\min\left\{\widetilde W(y,F+\nabla \varphi_{n_j}),n_j(|F+\nabla\varphi_{n_j}|^p+1)\right\}\dx.
	\end{equation*}
	Since in general the minimum of two polyconvex functions is no longer polyconvex, we estimate the integrand from below by its polyconvex hull, i.e., the greatest polyconvex function below the map $F\mapsto V_{n_j}(x,F):=\min\{\widetilde{W}(x,F),n_j(|F|^p+1)\}$. Since $V_{n_j}$ is non-negative and finite, it follows from \cite[Theorem 6.6]{Dac} that the polyconvex hull $\mathcal{P}V_{n_j}$ with respect to the second variable is again a Carath\'eodory-function. Hence, continuing the previous chain of inequalities and using the weak lower semicontinuity of non-negative convex functionals together with the convergence properties of the minors of $F+\nabla\varphi_{n_j}$ derived above, for every $\bar n\in\N$ we have
	\begin{equation}\label{eq:withenvelope}
		\sup_{n}\overline{W}_n^{(k)}(F)\geq \fint_{kY}\mathcal{P}V_{\bar n}(x,F+\nabla\varphi)\dx.
	\end{equation}
	Clearly the sequence $\mathcal{P}V_{n}$ is monotone increasing with respect to $n$. We claim that it converges to $\widetilde{W}$ as $n\to +\infty$. To this end, we note that due the polyconvexity and non-negativity of $\widetilde{W}(x,\cdot)$ and \cite[Theorem 6.36]{FoLe} we find a sequence of polyaffine functions $a_i(x,\cdot)$ that approximates $\widetilde{W}(x,\cdot)$ from below and such that its coefficients belong to $L^{\infty}(kY)$. In particular, there exist constants $C_i\geq 1$ such that
	\begin{equation*}
		|a_i(x,F)|\leq C_i(|F|^d+1),
	\end{equation*}
	which then implies for every $i\in\N$ that
	\begin{equation*}
	\sup_n\mathcal{P}V_{n}(x,F)\geq \sup_n\mathcal{P}\min\{a_i(x,\cdot),n(|\cdot|^p+1)\}(F)=\mathcal{P}a_i(x,\cdot)(F)=a_i(x,F).
	\end{equation*}
	Letting $i\to +\infty$ we deduce that $\sup_n\mathcal{P}V_n(x,F)\geq \widetilde{W}(x,F)$. The other inequality is evident. Hence, letting $\bar n\to +\infty$ in \eqref{eq:withenvelope}, we deduce from monotone convergence that
	\begin{equation*}
		\sup_{n\in\N}\overline{W}_n^{(k)}(F)\geq\fint_{kY}\widetilde{W}(x,F+\nabla\varphi)\dx=\fint_{kY}W(x,F+\nabla\varphi)\dx\geq \overline{W}^{(k)}(F).
	\end{equation*}
	This concludes the proof.
\end{proof}

\begin{remark}[$\underline W(F)=W_{\rm hom}$?] A combination of Theorem~\ref{thm:upperbound} and Proposition~\ref{thm:lowerbound} yields a complete $\Gamma$-convergence result provided $\underline W=W_{\rm hom}$. The construction directly implies  $\underbar{W}\leq W_{\rm hom}$ but there are two serious obstructions to show the reverse inequality:

	%
	\begin{itemize}
		\item[i)] We would need to  exchange $\inf$ and $\sup$ and thus show commutation of truncation and homogenization. This is known to be true in a convex setting (see e.g.\ \cite{DG_unbounded,Mueller87}), but these arguments do not extend to a  non-convex setting. However, equation \eqref{claim:lb:whomk} shows that if the infima with respect to $k$ in the multi-cell formulas $\overline W_n$ and $\overline{W}$ are attained by a finite $k\in\N$, then truncation and homogenization commute. To the best of our knowledge there are no rigorous results on the relation between multi-cell and finite-cell formula in the context of incompressible elasticity. In the compressible case the example of M\"uller \cite{Mueller87} shows that in general the multi-cell formula does not reduce to a finite-cell formula, but in \cite{NS18,NS19} the equality of the multi-cell formula is proven for certain $F$ corresponding to small strains. 
		\item[ii)] In $W_{\rm hom}^{(k)}$ we take the infimum over Lipschitz-functions (see \eqref{eq:defW_hom}) while in $\overline W^{(k)}$ we take the infimum over $W^{1,p}$-functions (see \eqref{claim:lb:whomk}). While this makes no difference on the level of the approximation, that is, in \eqref{whomn}, the unboundedness of $W$ might feature the Lavrentiev phenomenon and thus we might have $\overline W^{(k)}(F)<W_{\rm hom}^{(k)}(F)$ or $\overline{W}(F)<W_{\rm hom}(F)$ for some $F\in\Sigma$.
	\end{itemize}
\end{remark}
\section*{Acknowledgments}
The research of MR is funded by the Deutsche Forschungsgemeinschaft (DFG, German Research Foundation) -- Project n$^\circ$ 530813503.

\end{document}